 \title[A geometric model for the module category of a gentle algebra]{A geometric model for the module category of a gentle algebra}
\author{Karin Baur}
\email{baurk@uni-graz.at}
\author{Raquel Coelho Sim\~oes}
\email{rcoelhosimoes@campus.ul.pt}
\newcommand{\harxiv}[1]{ \href{http://arxiv.org/abs/#1}{\texttt{arXiv:#1}}}
\newcommand{\hyref}[2]{ \hyperref[#2]{#1~\ref*{#2}} }
\theoremstyle{plain}
\newtheorem{theorem}{Theorem}[section]
\newtheorem{thm}{Theorem}
\newtheorem{lemma}[theorem]{Lemma}
\newtheorem{corollary}[theorem]{Corollary}
\newtheorem{proposition}[theorem]{Proposition}
\theoremstyle{definition}
\newtheorem{remark}[theorem]{Remark}
\newtheorem{example}[theorem]{Example}
\newtheorem{definition}[theorem]{Definition} 
\DeclareMathAlphabet{\mathpzc}{OT1}{pzc}{m}{it}
\newcommand{\sA}{\mathsf{A}}
\newcommand{\sM}{\mathsf{M}}
\newcommand{\sP}{\mathsf{P}}
\newcommand{\sS}{\mathsf{S}}
\newcommand{\sT}{\mathsf{T}}
\newcommand{\mcC}{\mathcal{C}}
\newcommand{\mcP}{\mathcal{P}}
\newcommand{\mcT}{\mathcal{T}}
\newcommand{\Fac}{\mathsf{Fac}}
\newcommand{\Sub}{\mathsf{Sub}}
\renewcommand{\geq}{\geqslant}
\renewcommand{\leq}{\leqs}
\renewcommand{\phi}{\varphi}
\renewcommand{\epsilon}{\varepsilon}
\DeclareMathOperator{\Hom}{\mathsf{Hom}}
\renewcommand{\mod}[1]{\mathsf{mod}(#1)}
\newcommand{\kk}{{\mathbf{k}}}
\newcommand{\leqs}{\leqslant}
\newcommand{\arc}[1]{\mathtt{#1}}
\newcommand{\arup}{\ar@/^/[u]}
\newcommand{\ardn}{\ar@/^/[d]}
\newcommand{\coloneqq}{\mathrel{\mathop:}=}
\begin{document}

\begin{abstract}
In this article, gentle algebras are realised as tiling algebras, which are associated to partial triangulations of unpunctured surfaces with marked points on the boundary. This notion of tiling algebras generalise the notion of Jacobian algebras of triangulations of surfaces and the notion of surface algebras. We use this description to give a geometric model of the module category of any gentle algebra. 
\end{abstract}

\keywords{Tiling algebras; gentle algebras; module category; geometric model; bordered surface with marked points; partial triangulations.}

\subjclass[2010]{Primary: 05E10, 16G20, 16G70; Secondary: 05C10}

\maketitle

\addtocontents{toc}{\protect{\setcounter{tocdepth}{-1}}}  
\section*{Introduction} 
\addtocontents{toc}{\protect{\setcounter{tocdepth}{1}}}   

Gentle algebras are an important class of finite dimensional algebras of tame representation type. Gentle algebras are Gorenstein~\cite{GR}, closed under tilting and derived equivalence~\cite{S, SZ}, and they are ubiquitous, for instance they occur in contexts such as Fukaya categories~\cite{HKK}, dimer models~\cite{B}, enveloping algebras of Lie algebras~\cite{HK} and cluster theory. As such, there has been widespread interest in this class of algebras; see for example~\cite{BDMTY, SC, CPS, PPP} for recent developments in this area. 

In the context of cluster theory, gentle algebras arise as $m$-cluster-tilted algebras, $m$-Calabi-Yau tilted algebras (see e.g.~\cite{G-E}), and as Jacobian algebras associated to triangulations of unpunctured surfaces with marked points on the boundary~\cite{ABCP, LF}. 

The combinatorial geometry of the surfaces plays an important role in cluster-tilting theory, and in representation theory in general. It was used to give a geometric model of categories related to hereditary algebras of several types (see e.g.~\cite{BaM, BaT, CCS, L, Tor}), and of generalised cluster categories and module categories of Jacobian algebras~\cite{BZ}. The combinatorics of the surfaces is also useful to give a description of certain $m$-cluster-tilted algebras in terms of quivers with relations, and to study their derived equivalence classes  (see e.g.~\cite{Gu, Gu1, Mu}). On the other hand, geometric models associated to certain categories have also been used to tackle several representation-theoretic questions on those categories, such as the classification of torsion pairs (see e.g.~\cite{BBM, CSP, HJR}).

In this article, we realise gentle algebras as {\it tiling algebras}, by considering partial triangulations of the surfaces mentioned above. These algebras were already considered in~\cite{CSPs, GM} in the case where the surface is a disc, and they are a natural generalisation of Jacobian algebras and surface algebras~\cite{DRS}. See~\cite{Demonet} for other types of algebras related to partial triangulations. Viewing gentle algebras as tiling algebras allows us to construct a geometric model for their module categories. The main results of this article are stated as follows:

\begin{thm}[Proposition~\ref{prop:tilingendalgebra} and Theorem~\ref{thm:tilinggentle}]\label{thmA}
Let $A$ be a finite dimensional algebra. Then the following are equivalent. 
\begin{compactenum}
\item $A$ is a gentle algebra. 
\item $A$ is a tiling algebra.
\item $A$ is the endomorphism algebra of a partial cluster-tilting object of a generalised cluster category. 
\end{compactenum}
\end{thm}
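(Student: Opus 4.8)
The plan is to prove the two equivalences $(1)\iff(2)$ and $(2)\iff(3)$ separately, since being a tiling algebra, condition $(2)$, is the bridging notion through which the other two are compared. Of these, $(2)\iff(3)$ is essentially formal once a categorification of the surface is in place, while $(1)\iff(2)$ carries the combinatorial substance of the theorem and is where I expect the real difficulty to lie.

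For $(2)\Rightarrow(1)$ I would show that every tiling algebra is gentle by a purely local analysis. Fix a partial triangulation $T$ of the surface; its tiling algebra has the arcs of $T$ as the vertices of its quiver $Q$, with arrows recording when two arcs are consecutive around a common tile and relations recording the ``turns'' one is forbidden to make inside a tile. Each internal arc lies on the boundary of at most two tiles, so at most two arrows start and at most two arrows end at each vertex. Reading off the finitely many possible configurations of arcs within a single tile then shows that every relation is a path of length two, that for each arrow there is at most one continuation extending to a nonzero path of length two and at most one continuation extending to a relation, and symmetrically for incoming arrows. These are precisely the defining conditions of a gentle algebra.

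The converse $(1)\Rightarrow(2)$ is where the real work lies, and I expect it to be the main obstacle. Given a gentle algebra $A=\kk Q/I$, I would build a surface together with a partial triangulation whose tiling algebra is $A$, reversing the dictionary above: assign to each vertex of $Q$ an arc, and glue polygonal tiles so that two arcs become consecutive around a tile exactly when there is a corresponding arrow, with the forbidden turns inside the tiles dictated by the generators of $I$. Concretely one realises each arrow by a local ``fan'' and then glues these local pieces along the identifications forced by the length-two relations and the length-two nonrelations, filling in boundary segments to close up the surface. The delicate points are global: one must check that the gluing yields a genuine oriented unpunctured surface with boundary (no self-intersections, correct cyclic orderings at the endpoints of the arcs), and, most importantly, that the tiling algebra read off from the resulting partial triangulation reproduces $Q$ and $I$ on the nose rather than merely up to some weaker equivalence. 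The gentle conditions are exactly what guarantee that the local fans are mutually compatible and that the global assembly is consistent.

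Finally, for $(2)\iff(3)$ I would pass to the generalised cluster category $\cC$ associated to the surface. In this categorification the indecomposable rigid objects correspond to the arcs, complete cluster-tilting objects correspond to triangulations, and the endomorphism algebra of a cluster-tilting object is the Jacobian algebra of the triangulation. A partial triangulation $T$ then corresponds to a rigid, but not necessarily maximal, object $X_T$, that is, a partial cluster-tilting object, and unwinding the definitions identifies $\End_{\cC}(X_T)$ with the tiling algebra of $T$. Conversely any partial cluster-tilting object is a direct summand of a cluster-tilting object, hence arises from a partial triangulation, giving $(3)\Rightarrow(2)$. The only subtlety is bookkeeping the boundary arcs and handling the partial case uniformly with the complete one; the matching of arrows and relations with morphisms and Jacobian-type relations is then forced by the construction of $\cC$.
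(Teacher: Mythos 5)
Your decomposition---$(1)\Leftrightarrow(2)$ and $(2)\Leftrightarrow(3)$ with tiling algebras as the bridge---is exactly the paper's (Theorem~\ref{thmA} is assembled from Theorem~\ref{thm:tilinggentle} and Proposition~\ref{prop:tilingendalgebra}), but your execution of the individual implications genuinely differs. The paper's workhorse is Lemma~\ref{lem:tilingalgebraviatriangulations}: after adding a marked point to every unmarked boundary component (which does not change the algebra), the partial triangulation $\sP$ is completed to a triangulation $\sT$ (using~\cite{FST}), and $A_\sP$ is shown to be isomorphic to the ``contraction'' $A_{\sP,\sT}$ of the Jacobian algebra $A_\sT$. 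Both implications are then harvested through this single lemma: gentleness of $A_\sP$ is \emph{transferred} from the known gentleness of $A_\sT$~\cite{ABCP}, using that maximal direct strings in $Q_\sT$ control the arrows and relations of $A_{\sP,\sT}$; and $(2)\Leftrightarrow(3)$ is obtained by computing morphisms between the summands of $\mcP$ inside $B_\mcT\simeq A_\sT$~\cite{Amiot,BZ,LF}. You instead prove $(2)\Rightarrow(1)$ by a direct local analysis and $(1)\Rightarrow(2)$ by constructing the surface from scratch, whereas the paper invokes the marked ribbon graph $\Gamma_A$ and its embedding into a surface $S_A$ from~\cite{OPS} and only converts that picture into a partial triangulation. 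Your route is more self-contained and elementary; the paper's buys brevity and uniformity at the price of leaning on the literature.

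Two soft spots in your version deserve attention. First, in the local analysis for $(2)\Rightarrow(1)$, the counting should be organised by fans at the endpoints of an arc rather than by tiles: arrows out of a vertex correspond to immediate successors in the complete fans at the (at most two) ends of the corresponding arc, and the delicate configurations are precisely loops, tiles of type I and II enclosing unmarked boundary components, and the exceptional loop relation of Figure~\ref{fig:relations}; your phrase ``finitely many configurations within a single tile'' glosses over exactly the cases where conditions (G1) and (G3) need checking. Second, and more seriously, your $(1)\Rightarrow(2)$ is a plan rather than a proof: you correctly name the global verifications (orientability and consistency of the cyclic orderings, and recovering $Q$ and $I$ on the nose) but do not discharge them---this is precisely the content the paper outsources to the ribbon-graph construction and lamination of~\cite{OPS}, so as written the central difficulty of the theorem remains open in your argument. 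Finally, for $(2)\Leftrightarrow(3)$ note that the generalised cluster category is only available when every boundary component carries a marked point and the surface admits a triangulation, so the passage from $(\sS,M)$ to $(\sS,M')$ is a necessary first step, not mere bookkeeping.
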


\begin{thm}[Theorem~\ref{thm:permissiblearcsstrings}, Proposition~\ref{prop:bandsclosedcurves} and Theorem~\ref{thm:irreduciblepivots}]\label{thmB:model}
Let $A_{\sP}$ be the tiling algebra associated to a partial triangulation $\sP$ of an unpunctured surface $\sS$ with marked points $M$ in its boundary. There are bijections between:
\begin{compactenum}
\item the equivalence classes of non-trivial permissible arcs in $(\sS,M)$ and non-zero strings of $A_{\sP}$;
\item permissible closed curves and powers of band modules; 
\item certain moves between permissible arcs and irreducible morphisms of string modules.
\end{compactenum}
\end{thm}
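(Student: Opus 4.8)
The plan is to exploit the fact that, by Theorem~\ref{thmA}, the algebra $A_{\sP}$ is gentle, so that the standard classification of indecomposables over a string algebra applies: up to isomorphism every indecomposable $A_{\sP}$-module is either a string module attached to a reduced walk $w$ in the quiver $Q$ of $A_{\sP}$ avoiding the relations, or a band module attached to a band together with a scalar. Each of the three bijections will then be obtained by translating the corresponding piece of this combinatorial classification into the geometry of $(\sS,M)$ supplied by $\sP$. Throughout, the bridge between the two sides is the same local analysis inside a single tile, which encodes both the arrows of $Q$ and the gentle relations of $A_{\sP}$.

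For part~(1) I would first define the assignment sending a permissible arc $\gamma$ to the walk $w(\gamma)$ obtained by recording, in order, the arcs of $\sP$ crossed by $\gamma$. Two consecutive crossings necessarily occur on the boundary of a common tile, and the local picture inside that tile dictates whether the corresponding step of the walk is an arrow of $Q$ or its formal inverse; this is exactly where the definition of the tiling algebra is used. The two points to verify are that (i) isotopic arcs yield the same walk, so the map descends to equivalence classes, and (ii) the combinatorial condition defining \emph{permissible} translates precisely into $w(\gamma)$ being a \emph{nonzero} string, i.e.\ a reduced walk traversing no relation of $A_{\sP}$. The inverse map reads a string as a recipe for concatenating arc segments across successive tiles, the geometry of each tile guaranteeing that every step can be realised; checking that the two constructions are mutually inverse up to isotopy completes this part. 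Part~(2) runs in parallel with closed curves in place of arcs: a permissible closed curve crosses $\sP$ cyclically and so produces a cyclic walk, and primitivity of the curve (not contractible, not boundary-parallel, not a proper power) corresponds exactly to this cyclic walk being a band rather than a string or a proper power of a shorter band. The $n$-th power of the curve is then matched with the band module of quasi-length $n$ supported on that band, the verification again reducing to the tile-by-tile analysis of part~(1) read around a loop.

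For part~(3) the idea is to identify each prescribed geometric move between permissible arcs---pivoting an endpoint of an arc about a marked point through a single tile---with the hook and cohook operations that generate irreducible morphisms between string modules. Concretely, such a move adds or deletes a single arrow or inverse arrow at one end of $w(\gamma)$, and one checks that these are precisely the elementary operations producing arrows in the Auslander--Reiten quiver of $\mod{A_{\sP}}$. Surjectivity---that every irreducible morphism of string modules arises from a move---then follows by comparing the two combinatorial descriptions of the irreducible maps.

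The main obstacle will be part~(1), specifically the exact matching of \emph{permissible} with \emph{nonzero string}. The delicate cases are arcs with endpoints on the boundary, crossings that are tangential or that meet degenerate tiles, and the bookkeeping of the equivalence relation near the marked points; treating these uniformly rather than by a long case analysis is where the geometric input carries the real weight. A secondary difficulty in part~(3) is establishing completeness near the ``edges'' of the Auslander--Reiten quiver, where the moves must be shown to interact correctly with projective--injective string modules and with the band modules produced in part~(2).
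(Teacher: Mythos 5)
Your high-level strategy coincides with the paper's: use the fact that $A_\sP$ is gentle so that the Butler--Ringel string/band classification applies, build the dictionary tile by tile, and match the geometric moves with the combinatorial operations generating irreducible maps. However, there are genuine gaps. In part (1) you check only that \emph{isotopic} arcs give the same walk and propose to verify that the two constructions are ``mutually inverse up to isotopy.'' This cannot succeed: the bijection is with classes under the paper's equivalence relation, which is strictly coarser than homotopy --- the relation exists precisely because non-homotopic permissible arcs can carry the same string (e.g.\ sliding an endpoint along consecutive sides of a tile, or changing the endpoint/winding data at a tile of type I or II). The composite arc $\to$ string $\to$ arc recovers the original arc only up to this coarser relation, so the substantive verifications --- that equivalent arcs give equal strings, and that arcs with equal strings are equivalent --- are exactly what your proposal omits. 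Relatedly, in part (2) your dictionary fails without an intersection hypothesis: a permissible closed curve winding around an unmarked boundary component, or crossing a single arc of $\sP$ exactly once, is permissible yet corresponds to no band; this is why Proposition~\ref{prop:bandsclosedcurves} restricts to closed curves $c$ with $|I_\sP(c)| \geq 2$, a restriction absent from your proposal.

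The most serious gap is in part (3). You assert that a pivot move ``adds or deletes a single arrow or inverse arrow at one end of $w(\gamma)$'' and that these single-letter operations are the ones producing arrows in the Auslander--Reiten quiver. For gentle algebras this is false: by Proposition~\ref{prop:BRmaps}, irreducible morphisms between string modules are given by adding a hook, $w \mapsto v^{-1}aw$ with $v$ the \emph{maximal} direct string starting at $s(a)$, or by removing a cohook, $w = va^{-1}w' \mapsto w'$ with $v$ maximal direct; in general these change the string by many letters at once. Geometrically, this forces the pivot move to sweep an endpoint across an entire fan of arcs of $\sP$ at the new marked point (with separate treatment of tiles of type I and II), which is precisely the content of the case analysis in the paper's proof of Theorem~\ref{thm:irreduciblepivots}. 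With your single-letter identification, a map such as $M(w) \to M(aw)$ would in general not be irreducible, so the claimed bijection between moves and irreducible morphisms would break down.
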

In the cases when the partial triangulation is either a triangulation or a cut of a triangulation, our model in Theorem~\ref{thmB:model} recovers the one given for surface algebras~\cite{DRS} (see also~\cite{ABCP,BZ}). We note that recently, a geometric description for the bounded derived category of a gentle algebra (up to shift) was introduced in~\cite{OPS} (see also~\cite{LP}, for the case of homologically smooth graded gentle algebras). The special case of discrete derived categories was previously studied in~\cite{Broomhead}. 

We also give a simple description, in terms of the partial triangulations of surfaces, of all the representation-finite gentle algebras. Theorem~\ref{thmB:model} (1) and (3) gives a complete geometric description of this class of gentle algebras, and examples include gentle algebras with no double arrows nor loops, and whose cycles are oriented and with radical square zero (these arise from discs) and derived-discrete algebras (these arise from annuli). 

This paper is organised as follows. In Section \ref{sec:gentlebackground}, we give the necessary background on gentle algebras. In Section \ref{sec:gentleandtilingalgebras} we introduce the notion of tiling algebra and give the proof of Theorem~\ref{thmA}. Section \ref{sec:geometricmodel} is devoted to the geometric model for the module category of a tiling algebra, and it includes a geometric description of morphisms between indecomposable modules.  

\section{Gentle algebras: basic notions and properties}\label{sec:gentlebackground}

In this section, we will recall the definition of gentle algebras and basic properties of their module categories. The main reference is~\cite{Butler-Ringel}. 

Let $\kk$ denote an algebraically closed field, $Q$ a quiver with set of vertices $Q_0$ and set of arrows $Q_1$, $I$ an admissible ideal of $\kk Q$, and $A$ the bound quiver algebra $\kk Q/I$. Given $a \in Q_1$, we denote its source by $s(a)$ and its target by $t(a)$. We will read paths in $Q$ from left to right, and $\mod A$ will denote the category of right $A$-modules. 

\begin{definition}\cite{AS}
A finite dimensional algebra $A$ is {\it gentle} if it admits a presentation $A = \kk Q/I$ satisfying the following conditions: 
\begin{compactenum}[(G1)]
\item Each vertex of $Q$ is the source of at most two arrows and the target of at most two arrows.
\item For each arrow $\alpha$ in $Q$, there is at most one arrow $\beta$ in $Q$ such that $\alpha \beta \not\in I$, and there is at most one arrow $\gamma$ such that $\gamma \alpha \not\in I$. 
\item For each arrow $\alpha$ in $Q$, there is at most one arrow $\delta$ in $Q$ such that $\alpha \delta \in I$, and there is at most one arrow $\mu$ such that $\mu \alpha \in I$. 
\item $I$ is generated by paths of length 2.
\end{compactenum}
\end{definition}

Throughout this section, $A = \kk Q/ I$ denotes a gentle algebra. To each arrow $a \in Q_1$,  we associate a formal inverse $a^{-1}$, with $s(a^{-1}) = t(a)$ and $t(a^{-1}) = s(a)$. The set of formal inverses of arrows in $Q_1$ is denoted by $Q^{-1}$. We can extend this to any path $p = a_1 \ldots a_n$ in $Q$, by setting 
$p^{-1} = a_n^{-1} \ldots a_1^{-1}$. 

A {\it string} $w$ is a reduced walk in the quiver which avoids relations, i.e. $w = a_1 \ldots a_n$, with $a_i \in Q \cup Q^{-1}$, for which there are no subwalks of the form $aa^{-1}$ or $a^{-1} a$, for some $a \in Q_1$, nor subwalks $ab$ such that $ab \in I$ or $(ab)^{-1} \in I$. The {\it length} of a string $w=a_1\ldots a_n$ is $n$. For each vertex $v \in Q_0$, there are two {\it trivial strings} $1_v^+$ and $1_v^-$. The vertex $v$ is both the source and target of each corresponding trivial string, and the formal inverse acts by swapping the sign, i.e. $(1_v^{\pm})^{-1} = 1_v^{\mp}$. For technical reasons, we also consider the empty string, also called the {\it zero string}. We will denote it by $w=0$.

A string $w= a_1 \ldots a_n$ is {\it direct} ({\it inverse}, resp.) if $a_i \in Q_1$ ($a_i \in Q_1^{-1}$, resp.), for all $i= 1, \ldots n$. 

A string is {\it cyclic} if its source and target coincide. A {\it band} is a cyclic string $b$ for which each power $b^n$ is a string, but $b$ itself is not a proper power of any string.

Each string $w$ in $A$ defines a {\it string module} $M(w) \in \mod A$. The underlying vector space of $M(w)$ is obtained by replacing each vertex of $w$ by a copy of the field $\kk$, and the action of an arrow $a$ on $M(w)$ is the identity morphism if $a$ is an arrow of $w$, and is zero otherwise. If $w =0$, then $M(w)=0$. Note that $M(w) \simeq M(v)$ if and only if $v = w$ or $v = w^{-1}$. 

Each band $b = a_1 \cdots a_n$ (up to cyclic permutation and inversion) defines a family of band modules $M(b, n, \varphi)$, where $n \in \mathbb{N}$ and $\varphi$ is an indecomposable automorphism of $k^n$. Here, each vertex of $b$ is replaced by a copy of the vector space $k^n$, and the action of an arrow $a$ on $M(b, n, \varphi)$ is induced by identity morphisms if $a = a_i$, for $i = 1, \ldots, n-1$ and by $\varphi$ if $a = a_n$. All string and band modules are indecomposable, and every indecomposable $A$-module is either a string module of a band module (cf.~\cite{Butler-Ringel}). 

Given two strings $v= a_1 \cdots a_n, w= b_1 \cdots b_m$ of length at least 1, the composition $vw$ is defined if $vw = a_1 \cdots a_n b_1 \cdots b_m$ is a string. 

In order to define composition of strings with trivial strings, and avoid ambiguity in the description of irreducible morphisms between string modules, we need to consider two sign functions $\sigma, \epsilon\colon Q_1 \rightarrow \{-1,1\}$ satisfying the following conditions:
\begin{compactitem}
\item If $b_1 \neq b_2 \in Q_1$ are such that $s(b_1) = s(b_2)$, then $\sigma (b_1) = - \sigma (b_2)$. 
\item If $a_1 \neq a_2 \in Q_1$ are such that $t(a_1) = t(a_2)$, then $\epsilon (a_1) = - \epsilon (a_2)$.
\item If $ab$ is not in a relation, then $\sigma (b) = - \epsilon (a)$. 
\end{compactitem}
We refer the reader to~\cite[p. 158]{Butler-Ringel}, for an algorithm for choosing these functions. We can extend the maps $\sigma, \epsilon$ to all strings as follows. Given an arrow $b \in Q_1$, we have $\sigma (b^{-1}) = \epsilon (b)$, and $\epsilon (b^{-1}) = \sigma (b)$. If $w = a_1 \cdots a_n$ is a string of length $n \geq 1$, $\sigma (w) = \sigma (a_1)$ and $\epsilon (w) = \epsilon (a_n)$. Finally, $\sigma (1_v^\pm) = \mp 1$ and $\epsilon (1_v^\pm) = \pm 1$. 

If $v$ is a string and $w = 1_x^\pm$, then $vw$ is defined if $t(v) = x$ and $\epsilon (v) = \pm 1$. Analogously, if $v = 1_x^\pm$ and $w$ is a string, then $vw$ is defined if $s(w) = x$ and $\sigma (w) = \mp 1$. In a nutshell, composition of strings $vw$ of a gentle algebra is defined if $\sigma(w) = - \epsilon (v)$. 

Now we recall the description of irreducible morphisms between string $A$-modules, given in~\cite{Butler-Ringel}. Let $w$ be a string and $M(w)$ be the corresponding string module. The irreducible morphisms starting at $M(w)$ can be described in terms of modifying the string $w$ on the left or on the right. 

{\it Case 1:} There is an arrow $a \in Q_1$ such that $a w$ is a string. Then let  $v$ be the maximal direct string starting at $s(a)$, and define $w_\ell$ to be the string $v^{-1} a w$.

{\it Case 2:} There is no arrow $a \in Q_1$ such that $a w$ is a string. Then, either $w$ is a direct string, or we can write $w = v a^{-1} w'$, where $a \in Q_1$, $w'$ is a string, and 
$v$ is a maximal direct substring starting at $s(w)$. In the former case, we define $w_\ell$ to be the zero string, and in the latter case $w_\ell \coloneqq w'$. 

In the literature, $w_\ell$ is said to be obtained from $w$ by either {\it adding a hook on $s(w)$} (in Case 1) or {\it removing a cohook from $s(w)$} (in Case 2). 

The definition of $w_r$ is dual. One can check that $(w_\ell)_r = (w_r)_\ell$, and we shall denote this string by $w_{r,\ell}$. Figures~\ref{fig:hooks1} and \ref{fig:hooks2} illustrates these concepts. 

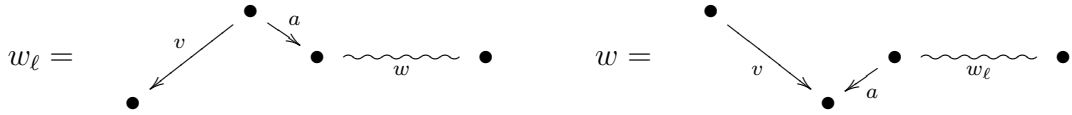
\begin{figure}
\xymatrix@C-1pc@R=4pt{
 &&& \bullet\ar[lldd]_{v}\ar[rd]^{a} && && 
    &&&  \bullet\ar[rrdd]_{v}  \\
w_{\ell}=&&  && \bullet\ar@{~}[rrr]_{w} &&& \bullet 
     &&w=&&&&\bullet\ar[ld]^a\ar@{~}[rrr]_{w_{\ell}} &&&\bullet  \\
 & \bullet  &&&&&& 
   & &&   & &\bullet  && 
}
\caption{Left: adding a hook on $s(w)$. Right: removing a cohook from $s(w)$.}
\label{fig:hooks1}
\end{figure} 

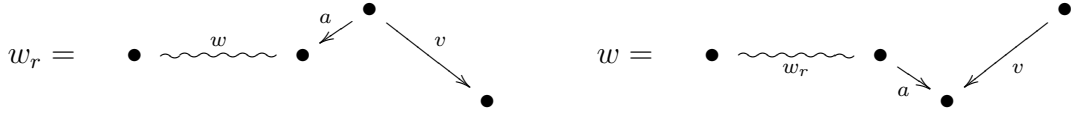
\begin{figure}
\xymatrix@C-1pc@R=4pt{ 
 &&&&& \bullet\ar[ld]_{a}\ar[rrdd]^{v}  && 
 && &
   &&&&&& \bullet\ar[lldd]^{v}   \\
 w_r=&\bullet &&& \bullet\ar@{~}[lll]_{w} &&   &
 &&w=& 
   \bullet\ar@{~}[rrr]_{w_r} &&& \bullet\ar[rd]_a &&&  \\ 
 &&&&&&& \bullet  
 & && 
   &&& &  \bullet  && 
}
\caption{Left: adding a hook on $t(w)$. Right: removing a cohook from $t(w)$.}
\label{fig:hooks2}
\end{figure} 
 
\begin{proposition}\label{prop:BRmaps}\cite{Butler-Ringel}
Let $w$ be a string of a gentle algebra $A$ and $M(w)$ be the corresponding string module. 
\begin{compactenum}
\item There are at most two irreducible morphisms starting at $M(w)$, namely $M(w) \rightarrow M(w_\ell)$ and $M(w) \rightarrow M(w_r)$. These maps are either the natural inclusion (in case 1), or the natural projection (in case 2). 
\item If $M(w)$ is not injective, then the Auslander-Reiten sequence starting at $M(w)$ is given by $0 \rightarrow M(w) \rightarrow M(w_\ell) \oplus M(w_r) \rightarrow M(w_{r, \ell}) \rightarrow 0$. In particular, $\tau^{-1} (M(w)) = M(w_{r,\ell})$.
\end{compactenum}
\end{proposition}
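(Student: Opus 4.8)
The plan is to derive both parts from the combinatorial description of homomorphisms between string modules. I would first record the relevant structure of $M(w)$: for $w = c_1 \cdots c_n$ it has a basis $\{z_0, \ldots, z_n\}$ indexed by the vertices traversed by the walk, with an arrow $a \in Q_1$ acting as the identity between consecutive basis vectors exactly when $a$ or $a^{-1}$ is the corresponding letter and as zero otherwise. The submodules of $M(w)$ are spanned by the subintervals of the walk closed under the downward action of arrows, and the quotients correspond dually to intervals closed upward; the top of $M(w)$ is supported on the peaks (local sources) of the walk and the socle on the valleys (local sinks). These observations make the canonical maps below, and the exactness of the sequence in (2), transparent at the level of bases.

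For part (1), I would realise each of the two maps as a \emph{graph map}, i.e.\ a composite $M(w) \twoheadrightarrow M(u) \hookrightarrow M(w')$ through a common substring $u$. In Case 1, $w_\ell = v^{-1} a w$ grafts a peak onto the left end of $w$, and $f_\ell \colon M(w) \to M(w_\ell)$ is the inclusion of the arrow-closed submodule supported on the $w$-part, so $u = w$. In Case 2, writing $w = v a^{-1} w'$ truncates a valley, $w_\ell = w'$, and $f_\ell$ is the projection onto the corresponding quotient, so $u = w_\ell$. I would prove $f_\ell$ is irreducible by showing it lies in the radical of the module category but not in its square: it is neither a split mono nor a split epi, and any factorization $f_\ell = h g$, once $g$ and $h$ are decomposed into graph maps, forces one of them to be an isomorphism because the grafted peak (resp.\ truncated valley) is minimal, so no string lies strictly between $w$ and $w_\ell$ realising the same canonical map. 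The symmetric analysis on the right end produces the distinct irreducible map $f_r \colon M(w) \to M(w_r)$, supported on the opposite end of the walk. Finally, the graph-map decomposition of $\Hom$ shows every radical map out of $M(w)$ is a sum of graph maps, each factoring through $f_\ell$ or $f_r$, so these are the only irreducible maps starting at $M(w)$.

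For part (2), assume $M(w)$ is not injective. The identity $(w_\ell)_r = (w_r)_\ell =: w_{r,\ell}$, which I would verify by noting that the left-end and right-end modifications act on disjoint ends and hence commute, makes the target $M(w_{r,\ell})$ well defined. Letting $g_r \colon M(w_\ell) \to M(w_{r,\ell})$ and $g_\ell \colon M(w_r) \to M(w_{r,\ell})$ be the irreducible maps obtained by modifying the remaining end, I would check that
\[
0 \rightarrow M(w) \xrightarrow{\binom{f_\ell}{f_r}} M(w_\ell) \oplus M(w_r) \xrightarrow{(g_r,\, -g_\ell)} M(w_{r,\ell}) \rightarrow 0
\]
is exact directly from the basis description: the walk of $M(w_{r,\ell})$ is obtained by gluing those of $M(w_\ell)$ and $M(w_r)$ along the common modified copy of $w$, and a vertexwise dimension count confirms short exactness (the degenerate sub-cases $w_\ell = 0$ or $w_r = 0$ being handled separately). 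By part (1) the left-hand map is minimal left almost split, so the sequence is the Auslander--Reiten sequence starting at $M(w)$ and $\tau^{-1} M(w) = \coker \binom{f_\ell}{f_r} = M(w_{r,\ell})$.

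The main obstacle is the passage from ``$f_\ell, f_r$ are the irreducible maps'' to ``$\binom{f_\ell}{f_r}$ is minimal left almost split'': one must show every $M(w) \to X$ that is not a split mono factors through it, which requires the full graph-map decomposition of $\Hom$ out of $M(w)$ and a uniform treatment of the four cases (hook or cohook at each end) together with the injective degenerations. The subtle bookkeeping lies in keeping the sign functions $\sigma, \epsilon$ consistent so that the canonical maps compose without ambiguity, and in verifying that the two walks glue compatibly at their shared vertices; once these are in place the almost split property, and hence the whole statement, follows.
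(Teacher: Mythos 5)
You should first note that the paper does not actually prove this proposition: it is recalled verbatim from \cite{Butler-Ringel}, with the citation built into the statement, so there is no in-paper argument to measure your attempt against. The fair benchmark is the cited literature, and your plan does follow that standard route: the canonical inclusion (Case 1) and projection (Case 2) realising $M(w)\to M(w_\ell)$, the observation that the two end-modifications commute so that $w_{r,\ell}$ is well defined, the vertexwise gluing and dimension count for exactness of the sequence in (2), and the use of graph maps in the sense of Crawley-Boevey and Krause (which the paper itself invokes later, as Theorem~\ref{thm:homs}) to control morphisms out of $M(w)$. All of that is correct and is essentially how the result is proved in the sources.

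The genuine gap is that the decisive step is asserted rather than proved, and that step is the entire content of the theorem. Your irreducibility argument for $f_\ell$ reduces to the claim that any factorization, ``once decomposed into graph maps, forces one factor to be an isomorphism''; but factorizations run through arbitrary modules (sums of string \emph{and} band modules), and the assertion that graph maps out of $M(w)$ compose and factor in the stated way is precisely what must be established --- note that the paper's Theorem~\ref{thm:homs} only gives $\dim_\kk\Hom$, not a basis with controlled composition, so you need the stronger statement from \cite{Krause}. More seriously, your part (2) hinges on the passage from ``$f_\ell, f_r$ are the only irreducible maps'' to ``$\binom{f_\ell}{f_r}$ is minimal left almost split'', which you yourself flag as ``the main obstacle'' and then declare to follow once the bookkeeping is in place. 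A proof cannot defer its main obstacle: showing that every map $M(w)\to X$ which is not a split monomorphism factors through $M(w_\ell)\oplus M(w_r)$ \emph{is} the almost-split property, i.e.\ the theorem itself, and it is exactly what Butler and Ringel spend their proof verifying. There is also a structural circularity: the claim inside your part (1) that ``every radical map out of $M(w)$ is a sum of graph maps, each factoring through $f_\ell$ or $f_r$'' is already the left almost split property, so part (2) does not follow from part (1) --- both rest on the same unproven factorization statement. In short: right maps, right sequence, right strategy, but the factorization argument at the heart of the result is missing.
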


This proposition actually holds more generally for string algebras. However, we only state it for gentle algebras, since this is the case we are interested in. 
The remaining irreducible morphisms are between band modules of the form $M(b,n,\varphi)$ and $M(b,n+1,\varphi)$. Band modules lie in homogeneous tubes, and so the Auslander-Reiten translate acts on them as the identity morphism.

\section{Gentle algebras as tiling algebras}\label{sec:gentleandtilingalgebras}

Throughout this paper, $\sS$ denotes an unpunctured, connected oriented Riemann surface with non-empty boundary and with a finite set $M$ of marked points on the boundary. We assume that, if $\sS$ is a disc, then $M$ has at least four marked points. The pair $(\sS,M)$ is simply called a marked surface. We allow {\it unmarked boundary components of $\sS$}, i.e. boundary components with no marked points in $M$.

An {\it arc} in $(\sS, M)$ is a curve $\gamma$ in $\sS$ satisfying the following properties:
\begin{compactitem}
\item The endpoints of $\gamma$ are in $M$.
\item $\gamma$ intersects the boundary of $\sS$ only in its endpoints. 
\item $\gamma$ does not cut out a monogon or a digon. 
\end{compactitem}

We always consider arcs up to homotopy relative to their endpoints. We call a collection of arcs $\sP$ that do not intersect themselves or each other in the interior of $\sS$ a {\it partial triangulation}. A triple $(\sS, M, \sP)$ satisfying the conditions above is a {\it tiling}.

Before giving the definition of tiling algebra associated to $(\sS, M, \sP)$, we need to introduce some terminology and notation. We follow the nice exposition given in~\cite{DRS}. 

An arc $\arc{v}$ in $(\sS, M,\sP)$ has two {\it end segments}, defined by trisecting the arc and deleting the middle piece. We denote these two end segments by $\overline{\arc{v}}$ and $\overline{\overline{\arc{v}}}$. 

Given a marked point $p$ in $M$, let $m', m''$ be two points in the same boundary component of $p$ such that $m', m'' \not\in M$ and $p$ is the only marked point lying in the boundary segment $\delta$ between $m'$ and $m''$. Draw a curve $c$ homotopic to $\delta$ but lying in the interior of $\sS$ except for its endpoints $m'$ and $m''$. The {\it complete fan at $p$} is the sequence $\arc{v}_1,  \ldots, \arc{v}_k$ of arcs in $\sP$ which $c$ crosses in the clockwise order. Any subsequence $\arc{v}_{i}, \arc{v}_{i+1}, \ldots, \arc{v}_j$ is called a {\it fan at $p$}. The arc $\arc{v}_i$ ($\arc{v}_j$, resp.) is said to be the {\it start} ({\it end}, resp.) of this fan. If the marked point $p$ is not the endpoint of any arc in $\sP$, then the (complete) fan at $p$ is called an {\it empty fan}. 

Every arc $\arc{v}$ in $\sP$ lies in either one non-empty complete fan, in case $\arc{v}$ is a loop, i.e. both endpoints coincide, or two distinct non-empty complete fans. 

\begin{definition}
The {\it tiling algebra} $A_{\sP}$ associated to the partial triangulation $\sP$ of $(\sS,M)$ is the bound quiver algebra $A_\sP = \kk Q_\sP / I_\sP$, where $(Q_\sP, I_\sP)$ are described as follows: 
\begin{compactenum}
\item The vertices in $(Q_\sP)_0$ are in one-to-one correspondence with the arcs in $\sP$. 
\item There is an arrow $a\colon a_s \rightarrow a_t$ in $(Q_\sP)_1$ if the arcs $\arc{a}_s$ and $\arc{a}_t$ share an endpoint $p_a \in M$ and $\arc{a}_t$ is the immediate successor of $\arc{a}_s$ in the complete fan at $p_a$.  
\item $I_\sP$ is generated by paths $ab$ of length 2 which satisfy one of the following conditions: 
\begin{compactitem}
\item $b = a$, i.e. the arcs corresponding to the sources and targets of $a$ and $b$ coincide and it is a loop;
\item $b \neq a$ and either $p_a \neq p_b$ or we are in the situation presented in Figure \ref{fig:relations}, 
with $\arc{a}$ a loop. 
\end{compactitem}
\end{compactenum}
\end{definition}

\begin{figure}[H]
\includegraphics[width=10cm]{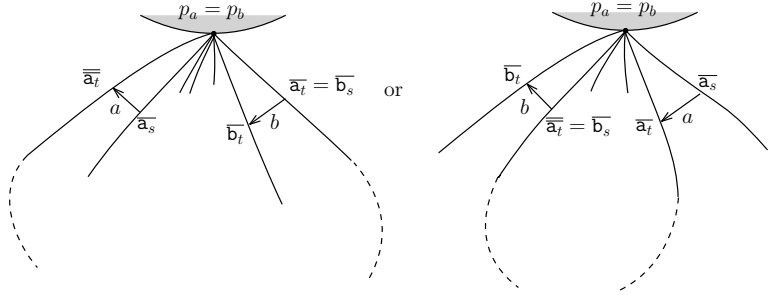}
\caption{Case $ab=0$ when $p_a=p_b$.}
\label{fig:relations}
\end{figure}

\begin{remark}
If $a, b \in (Q_\sP)_1$ with $t(b) = s(a)$ are such that $ab \in I_\sP$, then the arcs corresponding to $s(a), t(b) = s(a)$ and $t(b)$ bound the same region in $(\sS, M, \sP)$. However, the converse is not true in general. 
\end{remark}

\begin{lemma}
Every oriented cycle in $A_\sP$ of length $\geq 2$ has a zero relation. 
\end{lemma}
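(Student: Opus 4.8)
The plan is to establish the contrapositive by a monotonicity argument: a relation-free oriented cycle is impossible because it is forced to remain at a single marked point, where the arrows are arranged linearly. So I would begin by assuming that $a_1 a_2 \cdots a_n$, with $n \geq 2$, is an oriented cycle carrying no zero relation, so that $a_i a_{i+1} \notin I_\sP$ for every cyclically consecutive pair. Reading off the definition of $I_\sP$, any length-two path $ab$ with $a \neq b$ and $p_a \neq p_b$ lies in $I_\sP$; hence each relation-free pair must satisfy $a_i \neq a_{i+1}$ (ruling out the loop generator $b=a$) and $p_{a_i} = p_{a_{i+1}}$. Propagating this equality once around the cycle produces a single marked point $p$ at which all the arrows $a_1, \dots, a_n$ are based.

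Next I would record the local structure at $p$. The complete fan at $p$ is swept out by an open curve $c$ running between two unmarked boundary points on the two sides of $p$, and it meets the end segments emanating from $p$ in a finite clockwise list $e_1, \dots, e_m$; because $c$ has two distinct endpoints, this list is linearly, not cyclically, ordered. An arrow at $p$ from $\arc{a}_s$ to $\arc{a}_t$ is recorded by a consecutive pair in this list, namely an end segment of $\arc{a}_s$ immediately followed by an end segment of $\arc{a}_t$. I would therefore attach to each arrow of the cycle the position in $\{1, \dots, m\}$ of the end segment through which it leaves its source arc; the end segment at which it arrives at its target arc then sits at the next position, one larger.

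With this bookkeeping in place, I would show that the attached positions strictly increase along the cycle. Relation-freeness of $a_i a_{i+1}$ means that the end segment at which $a_i$ arrives at the shared arc (the target of $a_i$, equal to the source of $a_{i+1}$) is the very end segment from which $a_{i+1}$ departs; since departure is always through the immediate successor, the attached position jumps up by exactly one from $a_i$ to $a_{i+1}$. Going once around the cycle therefore increases the position by $n$ while returning it to its starting value, which is impossible for $n \geq 2$ in the finite linear list $e_1, \dots, e_m$. This contradiction proves that no relation-free cycle of length $\geq 2$ exists, which is the assertion.

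The one point demanding genuine care --- and the step I expect to be the crux --- is the coincidence of end segments at the shared arc when that arc is a \emph{loop}. A loop based at $p$ contributes two end segments at $p$ and so occurs twice in the list $e_1, \dots, e_m$; in principle the cycle could enter it through one of these and leave through the other, destroying the ``plus one'' step and letting the positions wrap around. This is exactly the configuration excluded by the special relation of Figure~\ref{fig:relations}: a length-two path that passes through a loop using its two distinct end segments belongs to $I_\sP$. Relation-freeness thus forces entry and exit through a single end segment even at loops, the unit increase is restored, and the monotonicity argument applies without exception.
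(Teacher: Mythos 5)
Your proof is correct and follows essentially the same route as the paper's: both reduce, via the generators with $b=a$ or $p_a \neq p_b$, to a cycle whose arrows are all based at a single marked point $p$, and then exploit the fact that the complete fan at $p$ is linearly (not cyclically) ordered, with the relation of Figure~\ref{fig:relations} ruling out the only possible wrap-around at a loop. Your explicit position-counting along the fan, including the care taken at intermediate loops, is just a more detailed rendering of the paper's one-line conclusion that the cycle would have to form a fan starting and ending at $\arc{v}_1$, forcing $\arc{v}_1$ to be a loop and $a_k a_1 \in I_\sP$.
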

\begin{proof}
Let $c = \xymatrix{v_1 \ar[r]^{a_1} & v_2 \ar[r]^{a_2} & \cdots \ar[r]^{a_{k-1}} &  v_k \ar[r]^{a_k} &  v_1}$ be an oriented cycle of length $k \geq 2$. Let $p_{i}$ be the marked point associated to arrow $a_i$, for $i = 1, \ldots, k$. If $a_i = a_{i+1}$ or $p_{i} \neq p_{i+1}$ for some $i =1, \ldots k+1$ (where $k+1 =1$), then $a_i a_{i+1} \in I_{\sP}$, by definition. So assume $a_i \neq a_{i+1}$ and $p \coloneqq p_i = p_{i+1}$, for all $i = 1, \ldots k+1$. Then the cycle $c$ corresponds to a fan at $p$ with $\arc{v}_1$ as both the start and end. In other words, $\arc{v}_1$ is a loop, and by definition of $I_\sP$, it follows that $a_k a_1 \in I_\sP$, which finishes the proof.  
\end{proof}

\begin{corollary}
Every tiling algebra is finite-dimensional. 
\end{corollary}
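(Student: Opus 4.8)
The plan is to deduce finite-dimensionality from the preceding lemma together with the defining conditions (G1)--(G4) that the tiling algebra satisfies (or, equivalently, from the combinatorial structure of $(Q_\sP, I_\sP)$). Since $A_\sP = \kk Q_\sP / I_\sP$ is a bound quiver algebra with finitely many vertices (these are in bijection with the finitely many arcs of the partial triangulation $\sP$) and finitely many arrows, it suffices to show that there are only finitely many paths in $Q_\sP$ that do not lie in the ideal $I_\sP$. Equivalently, $A_\sP$ is finite-dimensional precisely when $Q_\sP$ admits no infinite path avoiding the relations in $I_\sP$.

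First I would argue that any sufficiently long path avoiding $I_\sP$ must contain a repeated vertex, hence revisit some vertex and thereby trace out an oriented cycle as a subpath. Concretely, since $Q_\sP$ has only finitely many vertices, a path of length exceeding $|(Q_\sP)_0|$ must pass through some vertex twice; the subpath between two successive occurrences of that vertex is an oriented cycle of length $\geq 1$. If this cycle has length $\geq 2$, the preceding lemma guarantees it contains a zero relation, so the whole path lies in $I_\sP$, contradicting the assumption that our path avoids the relations. The only remaining possibility is an oriented cycle of length $1$, i.e.\ a loop $a\colon v \to v$; but by the definition of $I_\sP$ (the first bullet point, where $b = a$ and the arc is a loop) we have $a a \in I_\sP$, so a path cannot traverse a loop twice in succession, and more generally repeated traversal of any loop is blocked. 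Hence no path avoiding $I_\sP$ can have a repeated vertex, so every such path has length at most $|(Q_\sP)_0|$.

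From this bound the conclusion is immediate: the set of paths (equivalently, the set of nonzero strings built from arrows read in one direction, i.e.\ the monomial basis of $A_\sP$ coming from the admissible ideal $I_\sP$) is finite, so $\dim_\kk A_\sP < \infty$. I would phrase this by noting that $A_\sP$ has a basis indexed by the paths of $Q_\sP$ not belonging to $I_\sP$, together with the trivial paths at each vertex, and all of these have bounded length.

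The main obstacle is handling the cycles of length exactly $1$, since the lemma is only stated for oriented cycles of length $\geq 2$ and therefore does not directly rule out paths that loop. I expect the careful point to be verifying that a loop cannot be traversed repeatedly within a path avoiding relations, which follows from the loop relation $aa \in I_\sP$ in the definition of $I_\sP$; once this is in hand, the repeated-vertex argument closes off all cases. A clean alternative, which I would mention as a remark, is simply to invoke Theorem~\ref{thmA}: a tiling algebra is gentle, and gentle algebras are finite-dimensional by definition, so the corollary is a formal consequence once the equivalence $(1) \Leftrightarrow (2)$ of that theorem is established. However, since the corollary is placed before Theorem~\ref{thmA} in the logical development, the self-contained combinatorial argument via the lemma is the appropriate route here.
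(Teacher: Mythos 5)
Your overall strategy---bound the length of paths avoiding $I_\sP$ via the preceding lemma plus the loop relations, then use that the paths not lying in the monomial ideal $I_\sP$ form a basis of $A_\sP$---is exactly the deduction the paper intends (the paper offers no written proof, treating the corollary as immediate from the lemma). However, your implementation has a genuine gap: the claim that a path avoiding $I_\sP$ cannot revisit a vertex is false, because it misreads what the lemma guarantees. The lemma asserts that an oriented cycle $a_1 a_2 \cdots a_k$ has a zero relation among its \emph{cyclic} consecutive pairs, and in the final case of the paper's proof this relation is produced precisely at the wrap-around pair $a_k a_1$. That composition need not occur as a subpath of your ambient path, so from ``the cycle between two occurrences of a vertex has a zero relation'' you cannot conclude ``the whole path lies in $I_\sP$.'' Concretely, take the gentle (hence, by Theorem~\ref{thm:tilinggentle}, tiling) algebra $\kk Q/\langle ba\rangle$ where $Q$ has vertices $1,2$ and arrows $a\colon 1\to 2$, $b\colon 2\to 1$: the path $ab$ avoids $I$ yet revisits vertex $1$, and its intervening cycle carries its zero relation only at the wrap-around $ba$. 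Even your loop case fails as stated: in the paper's own example $\kk Q/\langle ab,ba,d^2\rangle$ the path $cd$ avoids relations and passes through the vertex $3$ twice, since a \emph{single} traversal of a loop already repeats a vertex. In fact your final bound is also false in general: in the gentle algebra with vertices $1,2$, arrows $a\colon 1\to 2$, $b,c\colon 2\to 1$ and $I=\langle ac, ba\rangle$, the path $cab$ avoids relations and has length $3>2=|(Q_\sP)_0|$.

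The repair is small but essential: argue with repeated \emph{arrows} rather than repeated vertices. If a path $a_1\cdots a_n$ avoiding $I_\sP$ had $a_i=a_j$ with $i<j$, then every cyclic consecutive pair of the cycle $a_i a_{i+1}\cdots a_{j-1}$---including the wrap-around pair $a_{j-1}a_i = a_{j-1}a_j$---occurs as an honest subpath of the ambient path and hence avoids $I_\sP$; this contradicts the lemma when $j-i\geq 2$, and contradicts the loop relation $a_i a_i\in I_\sP$ when $j=i+1$. Therefore no relation-avoiding path repeats an arrow, so all such paths have length at most $|(Q_\sP)_1|$, there are finitely many of them, and $A_\sP$ is finite-dimensional. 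With this substitution your argument is correct and is, in substance, the intended proof.
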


A {\it triangulation of $(\sS,M)$} is a partial triangulation cutting the surface into triangles. Note that not every surface admits a triangulation. 

Let $(\sS, M')$ be a new marked surface obtained from $(\sS,M, \sP)$ by adding a marked point in each unmarked boundary component. Note that these new marked points do not play a role in the definition of a tiling algebra, i.e. the tiling algebras associated to $(\sS, M, \sP)$ and $(\sS, M', \sP)$ are isomorphic. On the other hand, the partial triangulation $\sP$ in $(\sS, M')$ can be completed to a triangulation $\sT$ (cf.~\cite[Lemma 2.13]{FST}). 

Let $A_\sT = \kk Q_\sT / I_\sT$ be the Jacobian algebra associated to $\sT$ (see~\cite{ABCP} for more details). In particular, each vertex $v$ of $Q_\sT$ corresponds to an arc $\arc{v}$ of $\sT$, and the relations in $A_\sT$ are precisely the compositions of any two arrows in an oriented 3-cycle in $Q_\sT$, which correspond to internal triangles in $\sT$, i.e. triangles whose three sides are arcs in $\sT$.  

\begin{definition}
Let $A_{\sP, \sT} = \kk Q_{\sP,\sT} / I_{\sP, \sT}$ be the algebra obtained from $(Q_\sT, I_\sT)$ as follows:

Vertices: $(Q_{\sP,\sT})_0 = (Q_\sT)_0 \setminus \{u \mid \arc{u} \in \sT \setminus \sP\}$. In other words, the vertices are in one-to-one correspondence with the arcs in $\sP$. 

Arrows: For each direct string $v \rightarrow u_1 \rightarrow \cdots \rightarrow u_k \rightarrow v'$ in $Q_\sT$, where $\arc{v}, \arc{v}' \in \sP$ and $\arc{u}_1, \ldots, \arc{u}_k \in \sT \setminus \sP$, we have an arrow $v \rightarrow v'$ in $Q_{\sP,\sT}$. All the arrows in $Q_{\sP,\sT}$ are obtained in this way. 

Relations: Let $\xymatrix{v \ar[r]^a & v' \ar[r]^b & v''}$ be a path of length 2 in $Q_{\sP,\sT}$. This means we have two direct strings
$\xymatrix@C-1pc{v \ar[r] & u_1 \ar[r] & \cdots \ar[r] & u_k \ar[r]^\alpha & v'}$  and 
$\xymatrix@C-1pc{v' \ar[r]^\beta & u'_1 \ar[r] & \cdots \ar[r] &  u'_\ell \ar[r] & v''}$ in $Q_\sT$, 
with $\arc{u}_i, \arc{u'}_j \in \sT \setminus \sP$ for all $i$, $j$, 
and $\arc{v}, \arc{v}', \arc{v}'' \in \sP$. If $\alpha \beta \in I_\sT$, then  $ab \in I_{\sP,\sT}$.
\end{definition}

\begin{lemma}\label{lem:tilingalgebraviatriangulations}
The tiling algebra $A_\sP$ is isomorphic to $A_{\sP, \sT}$, for any triangulation $\sT$ completing $\sP$ in $(\sS, M')$. 
\end{lemma}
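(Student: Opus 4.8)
The plan is to build an explicit isomorphism $\Phi\colon A_\sP\to A_{\sP,\sT}$ by matching quivers first and relations second. Since the lemma allows us to work in $(\sS,M')$, where $A_\sP$ is unchanged, I may compare $\sP$-fans and $\sT$-fans at the same marked points throughout. On vertices there is nothing to do: both $(Q_\sP)_0$ and $(Q_{\sP,\sT})_0$ are canonically the set of arcs of $\sP$, so $\Phi$ is the identity on vertices. The whole argument rests on one geometric observation, which I would establish first: \emph{every direct string in $Q_\sT$ lies inside the fan at a single marked point, and a direct string must leave each arc it traverses at the same end by which it entered.} Concretely, if $\gamma\colon\arc{w}\to\arc{x}$ is an arrow entering $\arc{x}$ at the end lying at a marked point $p$, then the triangle of $\sT$ in the corner at $p$ between $\arc{w}$ and $\arc{x}$ has as third side the arc $z$ incident to $\arc{x}$ at its \emph{other} end; the oriented $3$-cycle $\arc{w}\to\arc{x}\to z\to\arc{w}$ forces $\gamma\cdot(\arc{x}\to z)\in I_\sT$, whereas the continuation along the fan at $p$ is relation-free. (If that third side is a boundary segment, $\arc{x}$ simply has no outgoing arrow at its other end, so the statement is vacuous there.) As $I_\sT$ is generated exactly by the length-$2$ paths of the internal triangles, this dichotomy is complete.

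From this I read off the arrow bijection. Because direct strings are fans, a direct string $\arc{v}\to\arc{u}_1\to\cdots\to\arc{u}_k\to\arc{v}'$ with $\arc{v},\arc{v}'\in\sP$ and $\arc{u}_i\in\sT\setminus\sP$ is precisely a run in the complete $\sT$-fan at some marked point $p$ in which only the two extreme arcs belong to $\sP$. Since the complete $\sP$-fan at $p$ is obtained from the complete $\sT$-fan at $p$ by deleting the arcs of $\sT\setminus\sP$ (the clockwise crossing order being the intrinsic geometric one), this is exactly the condition that $\arc{v}'$ be the immediate successor of $\arc{v}$ in the $\sP$-fan at $p$, i.e. the defining condition for an arrow $\arc{v}\to\arc{v}'$ of $Q_\sP$. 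This yields a bijection $(Q_\sP)_1\to(Q_{\sP,\sT})_1$, correctly producing double arrows when two arcs are consecutive at both shared endpoints and loops when a loop arc is its own successor; hence $\Phi$ is an isomorphism of path algebras $\kk Q_\sP\to\kk Q_{\sP,\sT}$.

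It remains to check $\Phi(I_\sP)=I_{\sP,\sT}$, and since both ideals are generated by length-$2$ paths it suffices to match generators. Given a path $\arc{v}\xrightarrow{a}\arc{v}'\xrightarrow{b}\arc{v}''$, let $\alpha$ (resp. $\beta$) be the last (resp. first) arrow of the direct string corresponding to $a$ (resp. $b$), so that $\alpha$ enters $\arc{v}'$ at one end, $\beta$ leaves $\arc{v}'$ at one end, and by definition $ab\in I_{\sP,\sT}$ iff $\alpha\beta\in I_\sT$. By the dichotomy above, $\alpha\beta\in I_\sT$ if and only if $\alpha$ and $\beta$ meet $\arc{v}'$ at \emph{different} ends. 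When $\arc{v}'$ is not a loop its two ends sit at distinct marked points, so ``different ends'' is literally ``$p_a\neq p_b$'', which is exactly the non-loop clause defining $I_\sP$; these cases therefore match verbatim.

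The genuine obstacle is the loop case, where both ends of $\arc{v}'$ meet the same marked point $p$ (so the criterion $p_a\neq p_b$ is useless) and $\arc{v}'$ occurs twice in the fan at $p$. Here I would argue purely in terms of ends: I expect to show that the configuration of Figure~\ref{fig:relations}, together with the case $b=a$, is exactly the translation of ``$\alpha$ and $\beta$ meet the loop $\arc{v}'$ at different ends'', while entering and leaving at the same end gives a relation-free composition in both algebras. Verifying this requires checking the two local pictures at $p$ (the two occurrences of $\arc{v}'$ and the ordering of the surrounding arcs of $\sT\setminus\sP$ collapsed by $\Phi$) against the defining clauses of $I_\sP$, and confirming that the fan-versus-end bookkeeping of the previous paragraph survives the doubling of $\arc{v}'$ in the fan. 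This case analysis, rather than any single computation, is where the care is needed; once it is complete, $\Phi$ descends to the desired isomorphism $A_\sP\cong A_{\sP,\sT}$.
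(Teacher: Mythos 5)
Your proposal is correct and follows essentially the same route as the paper's proof: vertices are identified outright, arrows of $Q_{\sP,\sT}$ are matched with arrows of $Q_\sP$ by observing that a direct string in $Q_\sT$ through arcs of $\sT\setminus\sP$ is precisely a run inside the complete fan at a single marked point, and the two relation ideals are compared through the marked points (ends) attached to the extreme arrows $\alpha,\beta$ of those direct strings. Your end-dichotomy in fact makes explicit a step the paper merely asserts, and your deferred loop-case verification is at the same level of detail as the paper's own closing remark about ``having in mind the possible internal triangles,'' so nothing essential is missing.
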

\begin{proof}
It is clear that the set of vertices $(Q_\sP)_0$ coincides with $(Q_{\sP,\sT})_0$, which corresponds to the arcs in $\sP$. 

An arrow $a\colon v \rightarrow v'$ belongs to $(Q_{\sP,\sT})_1$ if and only if there is a direct string $v \rightarrow u_1 \rightarrow \cdots \rightarrow u_r \rightarrow v'$ in $Q_\sT$, where each $u_i$ corresponds to an arc in $\sT \setminus \sP$. This is equivalent to saying that the sequence $\arc{v}, \arc{u}_1, \ldots, \arc{u}_r, \arc{v'}$ is a fan at a marked point $p$ of $(\sS, M', T)$, which in turn is equivalent to saying that the sequence $v, v'$ is a fan at the marked point $p$ of $(\sS,M,\sP)$, i.e. that $a\colon v \rightarrow v' \in (Q_P)_1$. Hence $Q_\sP = Q_{\sP,\sT}$. 

Finally, we have $ab \in I_{\sP,\sT}$ if and only if the following is a subquiver of $Q_\sT$: 
\[
\xymatrix{
s(a) \ar[r] & u_{1} \ar@{~>}[rr]  & & u_{r} \ar[r]^{\alpha} & t(a) \ar[r]^{\beta} 
 & u'_{1} \ar@/^1pc/[ll]^{\gamma} \ar@{~>}[rr] & & u'_{s} \ar[r] & t(b)
 },
\]
where the oriented 3-cycle has radical square zero.  We can deduce that this is equivalent to saying that $ab \in I_\sP$, by noting that $t(\alpha) = t(a) = s(b) = s(\beta)$, $p_a = p_\alpha$ and $p_b = p_\beta$, and by having in mind the possible internal triangles in $\sT$ that give rise to the oriented 3-cycle in the subquiver above.
\end{proof}

\begin{remark}
If $\sP$ is a triangulation, then our notion of tiling algebra recovers the concept of Jacobian algebra. Furthermore, our notion generalises the notion of surface algebra, which is defined for cuts of triangulations (cf.~\cite{DRS}).
\end{remark}

We will now state two neat consequences of Lemma~\ref{lem:tilingalgebraviatriangulations}. Assume $(\sS,M)$ is such that each boundary component of $\sS$ contains a marked point in $M$. Then one can associate a category, called generalised cluster category $\mcC_{\sS,M}$, to $(\sS, M)$. Moreover, every indecomposable object in $\mcC_{\sS,M}$ can be described in terms of curves in the surface. In particular, there is a bijection between (partial) triangulations $\sP$ of $(\sS,M)$ and (partial) cluster-tilting objects $\mcP$ in $\mcC_{\sS,M}$. Given a triangulation $\sT$ of $(\sS,M)$, the endomorphism algebra $B_\mcT$ of the corresponding cluster-tilting object $\mcT$ is known to be isomorphic to the Jacobian algebra $A_\sT$. More details can be found in~\cite{Amiot, ABCP, BZ, LF}. 

\begin{proposition}\label{prop:tilingendalgebra}
Let $A$ be a finite dimensional algebra. The following conditions are equivalent:
\begin{compactenum}
\item $A$ is a tiling algebra.
\item $A$ is isomorphic to the endomorphism algebra of a partial cluster-tilting object in a generalised cluster category. 
\end{compactenum}
\end{proposition}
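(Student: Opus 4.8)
\emph{Proof proposal.} The plan is to deduce both implications from a single identification: for any partial triangulation $\sP$ of a marked surface, with associated partial cluster-tilting object $\mcP$ in the generalised cluster category, one has $\End(\mcP) \cong A_\sP$. Granting this, the equivalence is immediate from the bijection, recalled just before the statement, between partial triangulations $\sP$ of $(\sS,M)$ and partial cluster-tilting objects $\mcP$ of $\mcC_{\sS,M}$. Indeed, if $A = A_\sP$ is a tiling algebra I first replace $(\sS,M)$ by $(\sS,M')$, adding a marked point on each unmarked boundary component, which leaves $A_\sP$ unchanged and makes $\mcC_{\sS,M'}$ available, and then $A \cong \End(\mcP)$; conversely, if $A \cong \End(\mcP)$ for a partial cluster-tilting object $\mcP$ in some $\mcC_{\sS,M}$, the corresponding partial triangulation $\sP$ satisfies $A \cong A_\sP$, exhibiting $A$ as a tiling algebra.

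To prove the identification I would argue as follows. Complete $\sP$ to a triangulation $\sT$ of $(\sS,M')$, using~\cite[Lemma 2.13]{FST}, so that the associated cluster-tilting object $\mcT$ contains $\mcP$ as a direct summand, say $\mcT = \mcP \oplus \mcU$ with $\mcU$ the sum of the indecomposable summands indexed by the arcs in $\sT \setminus \sP$. Writing $e$ for the idempotent of $\End(\mcT)$ that projects onto $\mcP$, the standard description of the endomorphism ring of a direct summand gives $\End(\mcP) \cong e \End(\mcT) e$. Since $\End(\mcT) \cong A_\sT$ is the Jacobian algebra of the triangulation, this reads $\End(\mcP) \cong e A_\sT e$, where $e$ corresponds to the sum of the vertex idempotents at the arcs of $\sP$. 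It therefore remains to identify the corner algebra $e A_\sT e$ with the combinatorially defined algebra $A_{\sP,\sT}$; once this is done, Lemma~\ref{lem:tilingalgebraviatriangulations} supplies $A_{\sP,\sT} \cong A_\sP$ and the argument is complete.

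The main work, and the step I expect to be the genuine obstacle, is the isomorphism $e A_\sT e \cong A_{\sP,\sT}$, i.e. computing the quiver and relations of the idempotent subalgebra and checking that they match the definition of $A_{\sP,\sT}$. Here I would exploit that $A_\sT$ is gentle with monomial relations (radical square zero on the oriented $3$-cycles coming from internal triangles): the nonzero paths in $A_\sT$ are exactly the direct strings, and by the gentle conditions (G1)--(G2) a nonzero path continues uniquely once its first arrow is fixed. Consequently an arrow of $e A_\sT e$ between two $\sP$-vertices $v, v'$ is precisely a nonzero direct string $v \to u_1 \to \cdots \to u_k \to v'$ passing only through arcs $\arc{u}_i \in \sT \setminus \sP$, which is exactly how the arrows of $A_{\sP,\sT}$ are defined. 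Dually, a length-two path $ab$ in $e A_\sT e$ vanishes precisely when the two direct strings it represents meet, at the intermediate $\sP$-vertex, in a relation $\alpha\beta \in I_\sT$; since $I_\sT$ is generated in length two, no longer obstructions arise, and this matches the relations of $A_{\sP,\sT}$. Care is needed to check that distinct arrows of $e A_\sT e$ correspond to distinct direct strings and that no relations are either missed or spuriously introduced, but the rigidity of the gentle structure reduces this to a finite, local verification at each marked point, organised by the fans of $\sP$ and of $\sT$.
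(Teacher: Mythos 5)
Your proposal is correct and follows essentially the same route as the paper: both complete $\sP$ to a triangulation $\sT$ of $(\sS,M')$, invoke the bijection between partial triangulations and partial cluster-tilting objects together with $\End(\mcT)\cong A_\sT$, identify $\End(\mcP)$ with $A_{\sP,\sT}$ arrow-by-arrow and relation-by-relation, and conclude via Lemma~\ref{lem:tilingalgebraviatriangulations}. The only cosmetic difference is that you package the identification as the corner algebra $e\,A_\sT\,e$, whereas the paper carries out the same computation phrased directly in terms of morphisms between summands of $\mcP$ in $\mcC_{\sS,M}$ that do not factor through other summands.
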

\begin{proof}
By Lemma~\ref{lem:tilingalgebraviatriangulations}, the algebra $A$ is a tiling algebra if and only if $A = A_{\sP,\sT}$, for some partial triangulation $\sP$ of a marked surface $(\sS,M)$ which admits a triangulation, and some triangulation $\sT$ completing $\sP$. By above, partial triangulations $\sP$ of marked surfaces $(\sS, M)$ admitting triangulations, are in bijection with partial cluster-tilting objects $\mcP$ in the generalised cluster category $(\sS,M)$. Denote the corresponding endomorphism algebra by $B_\mcP$. It is then enough to show that $A_{\sP,\sT} \simeq B_\mcP$. 

Since the Jacobian algebra $A_\sT$ is isomorphic to the endomorphism algebra $B_\mcT$ of the cluster-tilting object $\mcT$, we know that the vertices of the quiver $A_\sT$ are in one-to-one correspondence with the indecomposable summands of $\mcT$, and each non-zero path in the quiver of $A_\sT$ correspond to a non-zero morphism in $\mcC_{\sS,M}$ between the indecomposable summands associated to the endpoints of the path. We use the same notation for vertices in $A_\sT$ and indecomposable objects in $\mcC_{\sS,M}$, and for arrows in $A_\sT$ and corresponding morphisms in $\mcC_{\sS,M}$. 

Given two vertices $v$ and $v'$ of $A_\sT$ corresponding to two indecomposable summands of $\mcP$, each arrow $v \rightarrow v'$ in $B_\mcP$ corresponds a non-zero morphism from $v$ to $v'$ which does not factor through any other summand of $\mcP$. By above, this corresponds to a direct string $v \rightarrow u_1 \rightarrow \cdots \rightarrow u_r \rightarrow v'$ in $Q_\sT$, where each $\arc{u}_i$ lies in $\sT \setminus \sP$. Thus, the set of arrows of $(B_\mcP)$ coincides with $(Q_{\sP,\sT})_1$. 

Similary, a path $v_1 \rightarrow v_2 \rightarrow \cdots \rightarrow v_k$ in $B_\mcP$ corresponds to a zero morphism in $\mcC_{\sS,M}$ if and only if there are direct strings $v_i \rightarrow u_{i1} \rightarrow u_{i2} \rightarrow \cdots \rightarrow u_{ii_r} \rightarrow v_{i+1}$, for each $i = 1, \ldots, k-1$, such that each $\arc{u}_{ij} \in \sT \setminus \sP$, and the composition of all these strings yields a relation. Given the notion of the Jacobian algebra $A_\sT$, this is equivalent to saying that there is a relation in $I_\sT$ of the form $u_{ii_r} \rightarrow v_{i+1} \rightarrow u_{i+1,1}$. Thus, the ideal of $B_\mcP$ coincides with that of $A_{\sP,\sT}$, which finishes the proof. 
\end{proof}

Proposition~\ref{prop:tilingendalgebra} generalises~\cite[Theorem 3.7]{DRS}. The following proposition is a generalisation of~\cite[Lemma 2.5]{ABCP} and~\cite[Theorem 3.8]{DRS}. 

\begin{proposition}
Every tiling algebra $A_\sP$ is gentle. 
\end{proposition}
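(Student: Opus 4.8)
The plan is to verify directly that the given presentation $A_\sP = \kk Q_\sP/I_\sP$ satisfies the four defining conditions (G1)--(G4) of a gentle algebra, reading each condition off the fan combinatorics that defines $(Q_\sP, I_\sP)$. Condition (G4) holds immediately by construction, since $I_\sP$ is generated by paths of length $2$.

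For (G1), I would use the fact recorded just before the definition of $A_\sP$: every arc $\arc{v} \in \sP$ has two end segments $\overline{\arc{v}}$ and $\overline{\overline{\arc{v}}}$, each of which sits in exactly one complete fan (the two fans being distinct unless $\arc{v}$ is a loop). Inside a complete fan $\arc{v}_1, \ldots, \arc{v}_k$ at a marked point $p$, the arc in position $i$ is the source of exactly one arrow (to $\arc{v}_{i+1}$) when $i<k$ and the target of exactly one arrow (from $\arc{v}_{i-1}$) when $i>1$. Hence each end segment of $\arc{v}$ contributes at most one outgoing and at most one incoming arrow, so the vertex $\arc{v}$ is the source of at most two arrows and the target of at most two arrows, which is (G1).

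For (G2) and (G3), I would analyse a length-$2$ path $\alpha\beta$ in $Q_\sP$, so $t(\alpha) = s(\beta) = \arc{v}$; write $p_\alpha, p_\beta$ for the marked points attached to $\alpha, \beta$. The arrows $\beta$ leaving $\arc{v}$ split according to which end segment of $\arc{v}$ they use. If $\beta$ uses the end segment at $p_\alpha$, i.e.\ $\beta$ continues the same complete fan that produced $\alpha$, then $p_\beta = p_\alpha$ and, away from the loop configuration of Figure~\ref{fig:relations}, one has $\alpha\beta \notin I_\sP$; there is at most one such $\beta$, namely the immediate successor of $\arc{v}$ in the fan at $p_\alpha$. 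If instead $\beta$ uses the other end segment, then $p_\beta \neq p_\alpha$ and $\alpha\beta \in I_\sP$ by definition, and again there is at most one such $\beta$. Thus among the (at most two) arrows leaving $\arc{v}$, at most one composes with $\alpha$ to a non-relation and at most one to a relation; the dual argument applied to the arrows entering $s(\alpha)$ handles $\gamma\alpha$ and $\mu\alpha$, yielding (G2) and (G3).

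The one place needing genuine care is the loop case, where $\arc{v}$ occurs twice in a single complete fan and both arrows leaving it are attached to the same marked point $p$, so the clean dichotomy ``same fan versus different marked point'' breaks down. This is precisely the degenerate configuration isolated in Figure~\ref{fig:relations}, where one of the two same-fan composites is declared to lie in $I_\sP$; I expect checking that the count of relations among the composites through a loop stays consistent with (G2) and (G3) to be the main obstacle, and it is exactly what the loop clause in the definition of $I_\sP$ is engineered to secure. As a cross-check, one could alternatively deduce gentleness from Lemma~\ref{lem:tilingalgebraviatriangulations}, using that the Jacobian algebra $A_\sT$ of a completing triangulation is gentle and verifying that the passage to $A_{\sP,\sT}$ preserves (G1)--(G4); but the direct fan argument above seems more transparent.
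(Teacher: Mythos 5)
Your route is genuinely different from the paper's. The paper never argues on $(Q_\sP,I_\sP)$ directly: it invokes Lemma~\ref{lem:tilingalgebraviatriangulations} to identify $A_\sP$ with $A_{\sP,\sT}$ for a triangulation $\sT$ completing $\sP$, and then pulls gentleness back from the Jacobian algebra $A_\sT$, which is gentle by~\cite{ABCP} --- three arrows out of a vertex of $Q_{\sP,\sT}$ would give three distinct maximal direct strings of $A_\sT$ through that vertex, and a branching $\alpha\beta$, $\alpha\gamma$ in $Q_{\sP,\sT}$ comes from a branching $ab$, $ac$ in $Q_\sT$, where gentleness of $A_\sT$ forces exactly one of the two composites to vanish. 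Your direct fan-counting argument is more self-contained (it needs neither the completion of $\sP$ to a triangulation nor the cited result on Jacobian algebras), whereas the paper's reduction silently disposes of the one configuration that is genuinely delicate, namely loops.

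That is exactly where your proposal, as written, has a gap: for (G2)/(G3) you establish the dichotomy only when the middle arc $\arc{v}$ with $t(\alpha)=s(\beta)=v$ is not a loop, and for loops you merely state that you ``expect'' the loop clauses of $I_\sP$ to keep the count consistent. That expectation is precisely the statement to be proved, so it cannot be left in this form. It can, however, be closed with the same bookkeeping you already use for (G1). If $\arc{v}$ is a loop at $p$, then every arrow into or out of $v$ has associated marked point $p$ and is attached to one of the two occurrences of $\arc{v}$ in the complete fan at $p$, each occurrence carrying at most one in-arrow and at most one out-arrow. A composite $\alpha\beta$ through $v$ either continues the occurrence that $\alpha$ enters --- such a composite is a fan continuation, is never of the form $b=a$, has $p_\alpha=p_\beta$, and is not the configuration of Figure~\ref{fig:relations}, hence lies outside $I_\sP$ --- or it jumps to the other occurrence, and the jumping composites are exactly those placed in $I_\sP$ by the two loop clauses (the clause $b=a$, which kills the square of a loop arrow of the quiver, and the Figure~\ref{fig:relations} clause, as in the proof of the lemma on oriented cycles). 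Since each in-arrow admits at most one continuation and at most one jump, (G2) and (G3) hold at $v$. With this count added, your direct argument is complete; without it, it only covers tilings none of whose arcs are loops.
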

\begin{proof}
Let $\sP$ be a partial triangulation of $(\sS, M')$, $\sT$ be a triangulation completing $\sP$, and $A_\sT$ be the corresponding Jacobian algebra, which is known to be gentle~\cite[Lemma 2.5]{ABCP}.  

By Lemma~\ref{lem:tilingalgebraviatriangulations}, $A_\sP \simeq A_{\sP, \sT}$. Let $v$ be a vertex in $A_{\sP,\sT}$, and suppose, for a contradiction, that there are three distinct arrows starting at $v$. This means there are three distinct direct strings in $A_\sT$ starting at $v$. These direct strings are subpaths of three distinct non-trivial maximal direct strings in $A_\sT$ which contain the vertex $v$. However, since $A_\sT$ is gentle, every vertex lies in at most two non-trivial maximal direct strings, so we reached a contradiction. Similarly, we can prove that every vertex in $A_{\sP,\sT}$ is the target of at most two arrows. 

Consider the following subquiver of $Q_{\sP,\sT}$:
\[
\xymatrix@R-2pc{
                &   &  v_3\\
v_1 \ar[r]^\alpha & v_2 \ar[ur]^\beta \ar[dr]_\gamma \\
                &   & v_4.} 
\]
Then, we have the following subquiver of $Q_\sT$:
\[
\xymatrix@R-2pc{
           &              &            &          &            & v_3\\
           &              &            &          & u' \ar@{~>}[ur] \\
v_1 \ar[r] & \cdots \ar[r] & u \ar[r]^a & v_2 \ar[ur]^{b} \ar[dr]_c  \\
           &               &           &          & u'' \ar@{~>}[dr]  \\
           &               &           &          &            & v_4}.       
\]
Since $A_\sT$ is gentle, either $ab = 0$ or $ac = 0$, which implies that either $\alpha \beta = 0$ or $\alpha \gamma = 0$. A similar argument proves the dual. Finally, (G4) holds by definition.
\end{proof}

Conversely, we can associate to each gentle algebra $A$ a certain graph called {\it marked Brauer graph} $\Gamma_A$. We refer the reader to~\cite[Definition 1.8]{OPS} for details (see also~\cite{Schroll15}). This graph can be embedded in a surface $S_A$ with boundary (cf.~\cite{Labourie, OPS}), in such a way that the vertices of $\Gamma_A$ correspond to marked points $M_A$ in the boundary, and the edges are curves in the surface which do not intersect themselves or each other, except at the endpoints. The authors in~\cite{OPS} define a {\it lamination} of this embedding, which they use to recover the original algebra $A$. Let $M'$ be the set of marked points obtained from $M_A$ by adding a marked point in each boundary segment homotopic to an edge of the embedding of $\Gamma_A$ in $S_A$. Thus, $\Gamma_A$ can be realised as a partial triangulation of $(S_A, M')$ and one can check that the tiling algebra associated to $(S_A, M', \Gamma_A)$ is isomorphic to the algebra associated to the lamination mentioned above, and hence it is isomorphic to the gentle algebra we started with. This concludes the proof of the following theorem.

\begin{theorem}\label{thm:tilinggentle}
An algebra is gentle if and only if it is a tiling algebra. 
\end{theorem}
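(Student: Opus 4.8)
The plan is to establish the two directions separately and then combine them. One direction is already essentially done: the immediately preceding Proposition shows that every tiling algebra is gentle. So the remaining task is the converse, namely that every gentle algebra arises as a tiling algebra, and the proof consists of assembling this converse with the Proposition to obtain the stated equivalence.

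For the converse direction, I would take the following route. Starting from an arbitrary gentle algebra $A = \kk Q/I$, first associate to it its combinatorial skeleton, the marked Brauer graph $\Gamma_A$, following the construction in~\cite{OPS} (see also~\cite{Schroll15}). The vertices of $\Gamma_A$ encode the ``fans'' of arrows at the maximal direct/inverse paths of $A$, and the edges of $\Gamma_A$ correspond to the vertices of $Q$, i.e.\ to the simple $A$-modules. The next step is to embed $\Gamma_A$ into a compact oriented surface $S_A$ with boundary (cf.~\cite{Labourie, OPS}), ensuring that the embedding is such that the vertices of $\Gamma_A$ map to marked points $M_A$ on the boundary of $S_A$ and the edges become non-crossing curves meeting only at their endpoints. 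I would then enlarge $M_A$ to $M'$ by inserting one marked point in each boundary segment homotopic to an edge of the embedding, so that the edges of $\Gamma_A$ literally become arcs (in the sense of Section~\ref{sec:gentleandtilingalgebras}, avoiding monogons and digons) and $\Gamma_A$ is realised as a genuine partial triangulation $\sP$ of $(S_A, M')$.

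The crucial verification is then the algebra isomorphism: I must check that the tiling algebra $A_{\sP}$ associated to $(S_A, M', \Gamma_A)$ is isomorphic to $A$ itself. Here I would use the \emph{lamination} of the embedding defined in~\cite{OPS}, together with the fact proved there that this lamination recovers $A$. The comparison is done at the level of quivers with relations: the arrows of $Q_{\sP}$ arise precisely from pairs of edges of $\Gamma_A$ consecutive in a complete fan at a marked point, which by the Brauer-graph combinatorics matches the arrows of $Q$; and the relations in $I_{\sP}$, being generated by length-two paths of consecutive-but-not-composable fan arrows, match the generators of $I$ because both encode the ``forbidden compositions'' governed by the gentle conditions (G2)--(G4). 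This identification of $A_{\sP}$ with the algebra recovered from the lamination, and hence with $A$, completes the converse.

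I expect the main obstacle to be the careful bookkeeping in this last isomorphism, rather than any deep structural difficulty: one must confirm that the local picture at each marked point of $(S_A, M')$ (the complete fans and their successor relations) reproduces exactly the gentle data of $Q$ at the corresponding segment of the Brauer graph, including the degenerate cases where an edge is a loop or where a marked point carries an empty fan. Keeping the correspondence consistent through the addition of the auxiliary marked points in $M'$, and confirming that these auxiliary points do not introduce spurious arrows or relations (as already noted in the remark that the tiling algebras for $(\sS,M,\sP)$ and $(\sS,M',\sP)$ coincide), is the step requiring the most attention. Since the present excerpt treats this verification as routine (``one can check that...''), in my own write-up I would at minimum indicate why the fan-successor data and the length-two relations transport correctly across the construction, deferring the finer combinatorial details to the cited references.
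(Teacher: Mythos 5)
Your proposal is correct and follows essentially the same route as the paper: the forward direction is delegated to the preceding proposition that every tiling algebra is gentle, and the converse realises a gentle algebra $A$ as the tiling algebra of its marked Brauer graph $\Gamma_A$ embedded in the surface $S_A$ (with the extra marked points $M'$ inserted), identified with $A$ via the lamination construction of~\cite{OPS}. Your additional remarks on the fan/relation bookkeeping merely flesh out the step the paper itself treats as ``one can check''.
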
 

%
\section{The geometric model of the module category of a tiling algebra}\label{sec:geometricmodel}
%

In order to characterise the module category of a gentle algebra geometrically, we will consider tilings $(\sS,M,\sP)$ where the partial triangulation $\sP$ divides $\sS$ into a collection of regions (also called {\it tiles}) of the following list: 

\begin{compactitem}
\item $m$-gons, with $m \geq 3$ and whose edges are arcs of $\sP$ and possibly boundary segments, and whose interior contains no unmarked boundary component of $\sS$;
\item one-gons, i.e. loops, with exactly one unmarked boundary component in their interior - we call these {\it tiles of type I};
\item two-gons with exactly one unmarked boundary component in their interior - we call these {\it tiles of type II} (see Figure \ref{fig:regionsIandII}).
\end{compactitem} 

\begin{figure}[H]
\includegraphics[height=3cm]{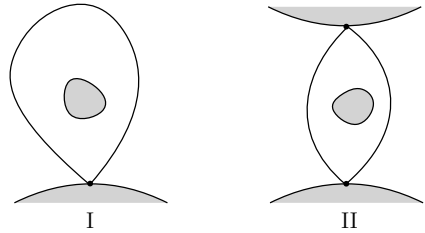}
\caption{Tiles of type I and II.}
\label{fig:regionsIandII}
\end{figure}

The fact that every gentle algebra $A$ can be realised as a tiling algebra associated to a tiling $(\sS, M, \sP)$ satisfying this setup, follows from~\cite[Proposition 1.12]{OPS}, which gives a description of the regions of the surface $(S_A, M_A, \Gamma_A)$. The tiling $(\sS, M, \sP)$ is obtained from $(S_A, M_A, \Gamma_A)$ (recall this notation from end of Section~\ref{sec:gentleandtilingalgebras}) by adding a marked point in each boundary segment homotopic to a curve of $\Gamma_A$, and by removing every unmarked boundary component lying in the interior of an $m$-gon, for $m \geq 3$. It is clear from the definition, that these changes do not affect the notion of tiling algebra.

From now on, $A_\sP$ denotes the tiling algebra associated to a tiling $(\sS, M, \sP)$ satisfying the conditions described above. 

\subsection{Permissible arcs and closed curves} 

Not every curve in the surface, with endpoints in $M$ will correspond to an indecomposable $A_\sP$-module. Moreover, non-homotopic curves can correspond to the same indecomposable module. Hence, we need the notion of permissible curves and of equivalence between these curves. 

\begin{definition}
\begin{compactenum}
\item A curve $\gamma$ in $(\sS,M, \sP)$ is said to {\it consecutively cross} $\arc{a}_1, \arc{a}_2 \in \sP$ if $\gamma$ crosses $\arc{a}_1$ and $\arc{a}_2$ in the points $p_1$ and $p_2$, and the segment of $\gamma$ between the points $p_1$ and $p_2$ does not cross any other arc in $\sP$. 
\item Let $B$ be an unmarked boundary component of $\sS$, $\gamma\colon [0,1] \rightarrow \sS$ be an arc in $(\sS,M)$, and write $\gamma = \gamma_1 \gamma' \gamma_2$, where $\gamma_1$ ($\gamma_2$, resp.) is the segment between $\gamma (0)$ ($\gamma (1)$, resp.) and the first (last, resp.) crossing with $\sP$. The {\it winding number of $\gamma_i$ around $B$}, where $i = 1, 2$, is the minimum number of times $\beta$ travels around $B$ in either direction, with $\beta$ lying in the homotopy class of $\gamma_i$.
\item An arc $\gamma$ in $(\sS,M, \sP)$ is called {\it permissible} if it satisfies the following two conditions: 
\begin{compactenum}
\item The winding number of $\gamma_i$ around an unmarked boundary component is either zero or one, for $i = 1, 2$.
\item If $\gamma$ consecutively crosses two (possibly not distinct) arcs $x$ and $y$ of $\sP$, then $x$ and $y$ have a common endpoint $p \in M$, and locally we have a triangle, as shown in Figure~\ref{fig:permissible}. 
\end{compactenum} 
\item A {\it permissible closed curve} is a closed curve $\gamma$ which satisfies condition (3)(b). 
\end{compactenum}
\end{definition}

\begin{figure}[H]
\includegraphics[height=2.5cm]{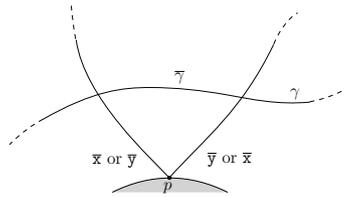}
\caption{$\gamma$ is a permissible arc.}\label{fig:permissible}
\end{figure}

Note that boundary segments and arcs in $\sP$ are considered to be permissible arcs. 

\begin{remark}
It follows from the definition that, to each consecutive crossing of a permissible arc with arcs in $\sP$ is associated an arrow of the tiling algebra. 
\end{remark}

\begin{example}
Figures \ref{fig:exnotpermissible1}, \ref{fig:exnotpermissible2} and \ref{fig:expermissible} show some examples of permissible and non-permissible arcs.
\begin{figure}[H]
\includegraphics[height=3cm]{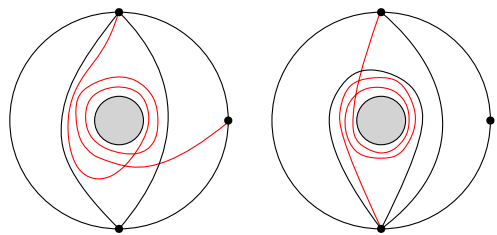}
\caption{Examples of arcs not satisfying condition (3)(a).} 
\label{fig:exnotpermissible1}
\end{figure}
\begin{figure}[H]
\includegraphics[height=3cm]{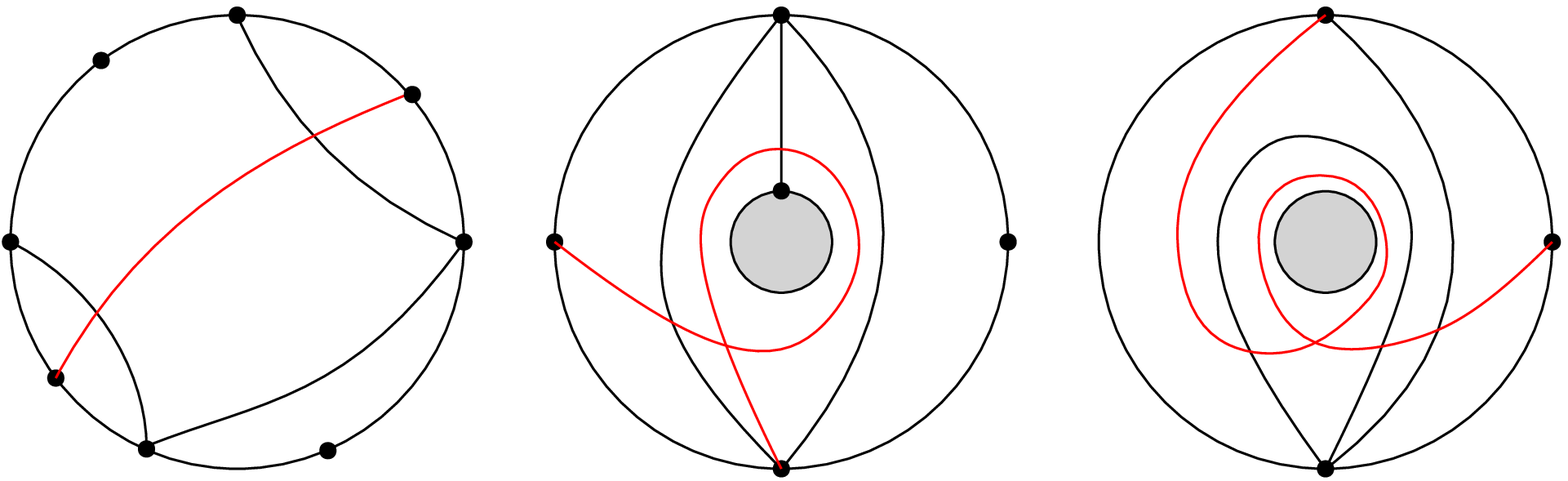}
\caption{Examples of arcs not satisfying condition (3)(b).} 
\label{fig:exnotpermissible2}
\end{figure}
\begin{figure}[H]
\includegraphics[height=3cm]{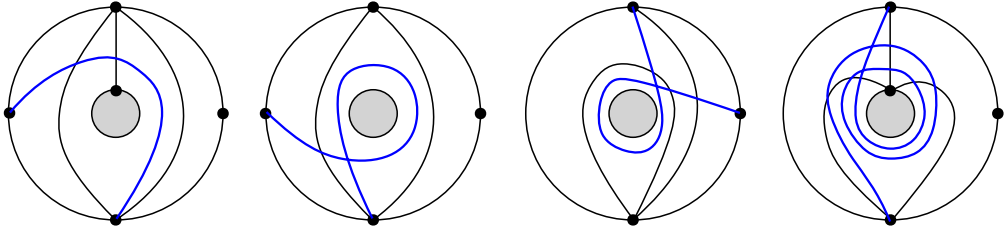}
\caption{Examples of permissible arcs.} 
\label{fig:expermissible}
\end{figure}
\end{example}

\begin{remark}
In the case when $\sP$ is a triangulation of the surface, every arc is permissible. Furthermore, if $\sP$ is a cut of a triangulation, and hence the corresponding algebra is a surface algebra in the sense of~\cite{DRS}, then our notion of permissible arcs coincides with that in~\cite{DRS}. Indeed, condition (3)(a) is superfluous in this case, since there are no unmarked boundary components, and condition (3)(b) boils down to saying that a permissible arc cannot cross two non-adjacent arcs of a quasi-triangle. 
\end{remark}

\begin{definition}
Two permissible arcs $\gamma, \gamma^\prime$ in $(\sS,M)$ are called {\it equivalent}, which we denote by $\gamma \simeq \gamma'$, if one of the following condition holds:
\begin{compactenum}
\item there is a sequence of consecutive sides $\delta_1, \ldots, \delta_k$ of a tile $\Delta$ of $(\sS,\sM,\sP)$ which is not of type I such that:
\begin{compactenum}
\item $\gamma$ is homotopic to the concatenation of $\gamma'$ and $\delta_1, \ldots, \delta_k$, and 
\item $\gamma$ starts at an endpoint of $\delta_1$ ($\delta_k$, resp.), $\gamma'$ starts at an endpoint of $\delta_k$ ($\delta_1$, resp.), and their first crossing with $\sP$ is with the same side of $\Delta$. 
\end{compactenum}
\item the starting points of $\gamma$ and $\gamma'$ are marked points of a tile $\Delta$ of type I or II; their first crossing (at point $p$) with $\sP$ is with the same side of $\Delta$ and the segments of $\gamma$ and $\gamma'$ between $p$ and their ending points are homotopic.
\end{compactenum}
\end{definition}

The equivalence class of a permissible arc $\gamma$ will be denoted by $[\gamma]$. 

\begin{example}
Figures~\ref{fig:equivalence1} and \ref{fig:equivalence2} show examples of permissible arcs which are equivalent. 
\begin{figure}
\includegraphics[height=2.7cm]{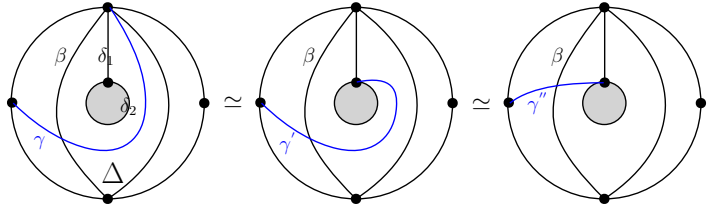}
\caption{Equivalences of permissible arcs.}
\label{fig:equivalence1}
\end{figure}
\begin{figure}
\includegraphics[height=2.7cm]{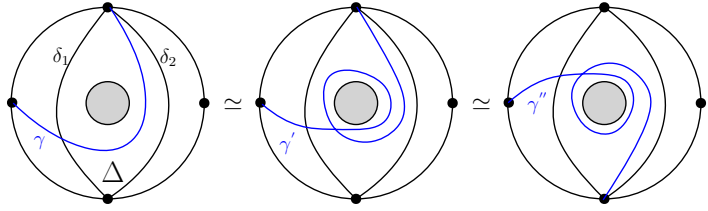}
\caption{Equivalences of permissible arcs when $\Delta$ is of type II.}
\label{fig:equivalence2}
\end{figure}
\end{example}

\begin{remark}
In the case when $\sP$ is a triangulation, equivalence classes of (permissible) arcs coincide with homotopy classes. Furthermore, when $\sP$ is a cut of a triangulation, our notion of equivalence of permissible arcs coincide with that in~\cite{DRS}. Indeed, there are no tiles of type I or II, so condition (2) in our definition does not apply. Moreover, the tiles of $\sP$ are either triangles or quasi-triangles, and condition (1) only applies to the latter, in which case the sequence of consecutive sides must have length one in order to satisfy (1)(b). 
\end{remark}

Given two curves $\gamma, \gamma'$ in $(\sS, M)$, we denote by $I (\gamma, \gamma')$ the minimal number of transversal intersections of representatives of the homotopy classes of $\gamma$ and $\gamma'$. Note that here, we are only considering crossings in the interior of $\sS$, not at its boundary. Hence, we might have $I(\gamma, \gamma') = 0$ for arcs $\gamma$, $\gamma'$ sharing an endpoint. Moreover, if $\gamma \in \sP$, then $I(\gamma, \gamma') = 0$, for every arc $\gamma' \in \sP$. 

Given a curve $\gamma$ in $(\sS, M, \sP)$, the {\it intersection vector} $I_\sP (\gamma)$ of $\gamma$ with respect to $\sP$ is the vector $(I(\gamma, \arc{a}_i))_{\arc{a}_i \in \sP}$.  The {\it intersection number $|I_\sP (\gamma)|$ of $\gamma$ with respect to $\sP$} is given by $\sum_{\arc{a} \in \sP} I(\gamma, \arc{a})$. We say that a permissible arc is {\it trivial} if its intersection number with respect to $P$ is zero. We associate the zero string to any trivial permissible arc $\gamma$, i.e. $w(\gamma) = 0$. Note that this includes boundary segments and arcs in $\sP$. 

\begin{theorem}\label{thm:permissiblearcsstrings}
Let $\sP$ be a partial triangulation of the surface $(\sS, M)$. There is a bijection between the equivalence classes of non-trivial permissible arcs in $(\sS,M)$ 
and the non-zero strings of $A_\sP$. Under this bijection, the intersection vector corresponds to the dimension vector of the corresponding string module. 
\end{theorem}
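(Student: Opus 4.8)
The plan is to construct the bijection explicitly in both directions and then verify compatibility with the intersection/dimension vectors. To pass from a permissible arc to a string, I would use the remark already noted in the excerpt: to each consecutive crossing of a permissible arc $\gamma$ with arcs in $\sP$ there is associated an arrow of $A_\sP$. So, given a non-trivial permissible arc $\gamma$, I would record the ordered sequence of arcs of $\sP$ that $\gamma$ crosses, say $\arc{a}_{i_1}, \ldots, \arc{a}_{i_n}$, and read off the sequence of arrows (or their formal inverses) determined by the successive triangles from condition (3)(b) in the definition of permissible. The orientation of each letter (arrow versus formal inverse) is dictated by whether $\gamma$ turns so that the shared endpoint $p$ of the two consecutively crossed arcs lies on one side or the other; this assigns a well-defined walk $w(\gamma)$ in $Q_\sP$. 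I would first check that $w(\gamma)$ is indeed a string, i.e.\ that it is reduced (no $aa^{-1}$ or $a^{-1}a$, guaranteed because consecutive crossings of the \emph{same} arc would create a monogon/digon, excluded for arcs) and avoids relations (guaranteed precisely because the local triangle picture of (3)(b) corresponds to a non-zero composition $ab \notin I_\sP$, using the description of $I_\sP$ via the completing triangulation from Lemma~\ref{lem:tilingalgebraviatriangulations}).

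Next I would show that $w$ is constant on equivalence classes and that the induced map $[\gamma] \mapsto w(\gamma)$ is well-defined and injective. Here the point is that the two equivalence relations in the definition are designed exactly so that moving $\gamma$ across the sides of a tile (type (1)) or across a tile of type I/II (type (2)) does not change which arcs of $\sP$ are crossed in the interior, nor the resulting walk; one checks this tile by tile. Conversely, to build the inverse map I would start from a string $w = a_1 \cdots a_n$ and reconstruct a permissible arc by concatenating, for each letter $a_j$, a small segment crossing the corresponding arc of $\sP$ and passing through the appropriate triangle around the marked point $p_{a_j}$, then choosing the two end segments $\overline{\gamma}, \overline{\overline{\gamma}}$ so that the winding number condition (3)(a) is satisfied (winding zero or one around any unmarked boundary component). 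I would verify that different choices of end segments yield equivalent arcs, so that the class $[\gamma]$ depends only on $w$, giving a two-sided inverse and hence the bijection.

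The final claim, that $I_\sP(\gamma)$ equals the dimension vector of $M(w(\gamma))$, then follows almost by construction: the underlying vector space of $M(w)$ places a copy of $\kk$ at each vertex appearing in the walk $w$, so the multiplicity of vertex $v$ in the dimension vector is the number of letters of $w$ incident to $v$, which is exactly the number of times $\gamma$ crosses the arc $\arc{v}$, namely $I(\gamma, \arc{v})$. I would be careful to match the minimal (transversal) intersection number with the reduced walk, using that $\gamma$ is taken up to homotopy and that permissibility forbids backtracking across a single arc.

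I expect the main obstacle to be the careful bookkeeping in the \emph{well-definedness and injectivity} step, i.e.\ proving that the map descends to equivalence classes and separates distinct classes. The subtlety is that two \emph{non-homotopic} permissible arcs may give the same string (these are exactly the equivalent ones), while two \emph{inequivalent} arcs must give genuinely different strings; disentangling this requires a precise local analysis of all the tile types (in particular the type I and type II tiles, which are the genuinely new phenomena beyond the surface-algebra case of \cite{DRS}) and the winding-number condition, since it is here that the correspondence between the geometric equivalence and the algebraic identification $M(w) \simeq M(w^{-1})$ must be matched.
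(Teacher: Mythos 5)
Your proposal follows essentially the same route as the paper's proof: both directions are constructed explicitly (arcs to strings by reading off consecutive crossings via condition (3)(b), strings to arcs by concatenating segments through tiles with a case analysis on the end segments governed by the tile types and the winding-number condition), well-definedness on equivalence classes is checked tile by tile, and the dimension-vector statement falls out of the construction since crossings are transversal and minimal. One caveat: your justification of reducedness (``consecutive crossings of the same arc would create a monogon/digon'') is too strong --- a permissible arc can consecutively cross the same arc of $\sP$ when that arc is a loop bounding a tile of type I, and such a crossing pair is realized by the loop arrow at that vertex rather than forbidden; the paper's proof records precisely this exception ($\arc{v}_i \neq \arc{v}_{i+1}$ unless $\arc{v}_i$ is a loop), and your forward map must allow it, or else strings containing loop arrows would have no preimage.
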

\begin{proof}
Suppose $w = \xymatrix{v_1 \ar@{<->}[r]^{\alpha_1} & v_2 \ar@{<->}[r]^{\alpha_2} & \cdots \ar@{<->}[r]^{\alpha_{k-1}} & v_k}$ is a non-zero string in $A_{\sP}$. Here, the double headed arrows indicate a fixed but arbitrary orientation of the arrows $\alpha_i$. 

We denote by $\arc{v}_i$ the arc in $\sP$ corresponding to the vertex $v_i$. We define a curve $\gamma (w)$ in $(\sS,\sM)$ as follows:  since there is an arrow $\alpha_1$ between $v_1$ and $v_2$, the arcs $\arc{v}_1, \arc{v}_2$ in $\sP$ share a common endpoint $p \in M$, and there are no other arcs of $\sP$ incident with $p$ sitting between $\arc{v}_1$ and $\arc{v}_2$. Hence, $\arc{v}_1$ and $\arc{v}_2$ are sides of a unique tile $\Delta_1$. We choose one point $q_1$ of the arc $\arc{v}_1$ and one point $q_2$ of the arc $\arc{v}_2$ and connect them by a curve $\gamma_1$ in the interior of $\Delta_1$ in such a way that we have a triangle as in the Figure~\ref{fig:curve}.
\begin{figure}
\includegraphics[height=2.5cm]{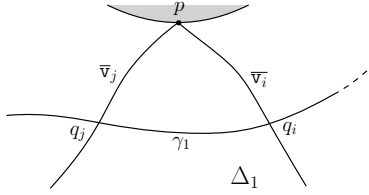}
\caption{The curve $\gamma_1$. Here $i \neq j \in \{1,2\}$.}
\label{fig:curve}
\end{figure}

Proceeding in the same way with the remaining arcs $\arc{v}_2, \ldots, \arc{v}_k$, we obtain curves $\gamma_2, \ldots, \gamma_{k-1}$. Now, the arc $\arc{v}_1$ is incident with to two tiles: $\Delta_1$ considered above and 
$\Delta_0$, the tile on the other side of $\arc{v}_1$. Note that these two tiles may coincide.

{\it Case 1:} $p$ is the only marked point of $\Delta_0$.

This means $\arc{v}_1$ is a loop and there is an unmarked boundary component $\sigma_0$ in the interior of $\Delta_0$, i.e. $\Delta_0$ is of type I. Let $\gamma_0$ be a curve in the interior of $\Delta_0$ with endpoints $p$ and $q_1$ and winding number $1$ around $\sigma_0$. Note that if the winding number were zero, then the concatenation of $\gamma_0$ with $\gamma_1$ would be homotopic to a curve which does not intersect $\arc{v}_1$. 

{\it Case 2:} $\Delta_0$ has exactly two marked points: $p$ and $p'$. 

These points must be the endpoints of $\arc{v}_1$. Since $\arc{v}_1$ is not homotopic to a boundary segment, $\Delta_0$ must be of type II. Choose $\gamma_0$ to be, for instance, a curve in the interior of $\Delta_0$ with endpoints $q_1$ and $p$ in such a way that we have a triangle with vertices $p, q_1$ and $p'$ (see Figure~\ref{fig:firstsegmentII}). 
\begin{figure}[H]
\includegraphics[height=3.3cm]{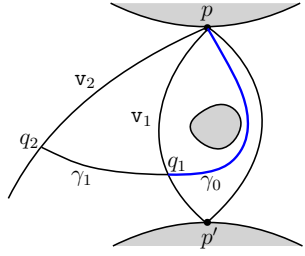}
\caption{$\gamma_0$ when $\Delta_0$ is of type II.}
\label{fig:firstsegmentII}
\end{figure}

{\it Case 3:} $\Delta_0$ has at least three marked points. 

Choose $\gamma_0$ to be a curve in the interior of $\Delta_0$ with endpoints $q_1$ and $q$, where $q$ is one of the marked points in $\Delta_0$ which is not an endpoint of $\arc{v}_1$. 

We proceed in the same way on the other end of the string, constructing a curve $\gamma_k$. The curve $\gamma (w)$ is then defined to be the concatenation of the curves $\gamma_0, \ldots, \gamma_k$. By construction, we have the following:
\begin{compactitem}
\item since $w$ is a string, $v_i \neq v_{i+1}$ unless $\arc{v}_i$ is a loop. Either way, none of the $\gamma_i$ is homotopic to a piece of an arc in $\sP$;
\item the crossing points of $\gamma(w)$ with the arcs in $\sP$ are indexed by the vertices of the string $w$, and the intersections are transversal;
\item $\gamma(w)$ is a permissible arc.  
\end{compactitem}

Thus, the intersection numbers are minimal and $I_{\sP} (\gamma(w)) = \underline{\dim}\, M(w)$, where $\underline{\dim}\, M(w)$ denotes the dimension vector of the module $M(w)$. 

Note that we have a choice of curves $\gamma_0$ and $\gamma_s$. But if we keep the winding number around unmarked boundary components at zero or one, in order to obtain a permissible arc, every possible choice would give a permissible arc equivalent to the one built above. 

Conversely, let $\gamma\colon [0,1] \rightarrow \sS$ be a permissible arc in $(\sS,M)$ with $|I_{\sP} (\gamma)| \neq 0$, belonging to an equivalence class $[\gamma]$. We assume that the arc is chosen (in its homotopy class) such that it intersects the arcs $\arc{a}$ of $\sP$ transversally and such that the intersections are minimal. 

Orienting $\gamma$ from $p = \gamma (0) \in M$ to $q = \gamma (1) \in M$, we denote by $\arc{v}_1$ the first internal arc of $\sP$ that intersects $\gamma$, by $\arc{v}_2$ the second arc, and so on. We thus obtain a sequence $\arc{v}_1, \ldots, \arc{v}_k$ of (not necessarily different) arcs in $\sP$. Since the intersections with arcs in $\sP$ are minimal, we know that $\arc{v}_i \neq \arc{v}_{i+1}$, unless $\arc{v}_i$ is a loop. Either way, there are arrows $\alpha_i\colon v_i \rightarrow v_{i+1}$ or $\alpha_i\colon v_{i+1} \rightarrow v_i$ in $Q (A_{\sP})$, by condition (3)(b) of the definition. Thus, we have a walk $w(\gamma) = \xymatrix{v_1 \ar@{<->}[r]^{\alpha_1} & v_2  \ar@{<->}[r]^{\alpha_2} & \cdots \ar@{<->}[r]^{\alpha_{k-1}} & v_k}$ in $Q (A_{\sP})$, which avoids relations and such that $\alpha_{i+1} \neq \alpha_i^{-1}$. Therefore, $w(\gamma)$ is a non-zero string in $A_{\sP}$. 

If $\gamma' \in [\gamma]$, it follows by the definition of equivalent arcs that $w(\gamma') = w(\gamma)$, so the map from equivalent classes of permissible arcs to strings is well defined. It follows from their construction that the two maps defined above are mutually inverse.   
\end{proof}

To finish the characterisation of indecomposable modules in terms of curves in the surface, we have to consider the bands.

\begin{proposition}\label{prop:bandsclosedcurves}
Let $\sP$ be a partial triangulation of the surface $(\sS, M)$. Then there is a bijection between the homotopy classes of permissible closed curves $c$ in $(\sS, M)$ with $|I_\sP (c)| \geq 2$ and powers of bands of $A_\sP$. Moreover, the permissible closed curve associated to the Auslander-Reiten translate of an indecomposable band module $M$ is the closed curve associated to $M$ itself. 
\end{proposition}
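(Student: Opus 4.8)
The plan is to establish the bijection by adapting the construction in the proof of Theorem~\ref{thm:permissiblearcsstrings} from arcs/strings to closed curves/bands, and then to handle the Auslander--Reiten statement using the fact that band modules lie in homogeneous tubes. First I would treat the bijection. A band $b = a_1 \cdots a_n$ is a cyclic string, so up to cyclic permutation and inversion it is a reduced closed walk in $Q_\sP$ avoiding relations. Applying the same local construction as in Theorem~\ref{thm:permissiblearcsstrings} --- for each consecutive crossing realise a triangle inside the corresponding tile as in Figure~\ref{fig:curve} --- produces, instead of an arc with two free ends, a closed curve $c(b)$ obtained by concatenating the segments $\gamma_1, \ldots, \gamma_n$ cyclically. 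Since $b$ is cyclic, there are no terminal tiles $\Delta_0, \Delta_k$ to worry about, which is precisely why the awkward Case~1/Case~2/Case~3 analysis of the arc proof disappears here; one only needs condition (3)(b), i.e.\ the permissibility of closed curves. The fact that $b$ is a string in every power guarantees that $c(b)$ does not backtrack and crosses $\sP$ transversally and minimally, so $|I_\sP(c(b))| = n = |b| \geq 2$, with the lower bound $2$ reflecting that a band must have length at least $2$ (a length-one cyclic string would be a loop $aa^{-1}$ or violate the no-relation/no-backtrack conditions).

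Next I would verify that this assignment descends to homotopy classes and is a bijection onto powers of bands. The condition that $b$ is not a proper power of any string corresponds exactly to $c(b)$ being a primitive closed curve (not homotopic to a nontrivial iterate of a shorter closed curve); a power $b^m$ then corresponds to traversing $c(b)$ $m$ times, matching ``powers of bands'' on the module side. Conversely, given a permissible closed curve $c$ with $|I_\sP(c)| \geq 2$, I would read off its consecutive crossings $\arc{v}_1, \ldots, \arc{v}_k$ (cyclically) and, exactly as in the converse direction of Theorem~\ref{thm:permissiblearcsstrings}, obtain a reduced cyclic walk $w(c)$ in $Q_\sP$ avoiding relations, hence a cyclic string; minimality of intersections again forces $\arc{v}_i \neq \arc{v}_{i+1}$ unless $\arc{v}_i$ is a loop, and condition (3)(b) supplies the arrows. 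That $w(c)$ is a band (and not a proper power) follows from $c$ being taken up to homotopy and primitive. The two constructions are mutually inverse by the same argument as before, and invariance under cyclic permutation and inversion of $b$ corresponds to the freedom in choosing a basepoint and orientation of $c$, which is exactly homotopy equivalence of the unoriented closed curve.

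For the final sentence, I would invoke the structural input recalled at the end of Section~\ref{sec:gentlebackground}: band modules lie in homogeneous tubes, so the Auslander--Reiten translate $\tau$ acts on them as the identity, i.e.\ $\tau M(b,m,\varphi) \cong M(b,m,\varphi')$ for the same band $b$ and same $m$ (only the automorphism $\varphi$ changes). Since the permissible closed curve associated to a band module $M(b,m,\varphi)$ depends only on the underlying band $b$ and the power $m$ --- and not on $\varphi$ --- the curve assigned to $\tau M$ is literally the curve $c(b)$ traversed $m$ times, which is the curve assigned to $M$. This gives the asserted $\tau$-invariance immediately.

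The main obstacle I expect is the careful homotopy bookkeeping in the bijection: showing that the primitivity of the band matches primitivity of the closed curve, and that distinct (non-cyclically-equivalent, non-inverse) bands give non-homotopic closed curves. This amounts to checking that the crossing sequence of a closed curve is a complete homotopy invariant on the permissible side, for which minimality of $|I_\sP(c)|$ is essential --- a non-minimal representative could exhibit a backtracking crossing pattern $\arc{v}_i \arc{v}_i$ with $\arc{v}_i$ not a loop, breaking the correspondence. The rest of the argument is a routine transcription of the open-arc case, with the simplification that no boundary-end tiles intervene.
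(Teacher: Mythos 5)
Your proposal is correct and takes essentially the same approach as the paper: the paper's own proof is a two-line sketch stating that the bijection is obtained by the same construction as in Theorem~\ref{thm:permissiblearcsstrings} (which is exactly the adaptation you carry out, including the observation that the endpoint cases disappear for closed curves) and that the $\tau$-invariance follows from band modules lying in homogeneous tubes, as you argue. The additional details you supply --- primitivity of the curve matching primitivity of the band, powers corresponding to iterated traversals, and the length $\geq 2$ bound --- are consistent with the intended argument.
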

\begin{proof}
The proof of the first statement is similar to that of the previous proposition. The second statement follows from the fact that the Auslander-Reiten translate acts on band modules as the identity morphism.
\end{proof}

Note that there can be permissible closed curves which do not intersect any arc in $\sP$, namely when they go around an unmarked boundary component or intersect just one arc of $\sP$ at only one point. Clearly these curves do not correspond to bands. However, if $\sP$ is a cut of a triangulation, such curves do not occur. This explains why the condition on the intersection number is required in Proposition~\ref{prop:bandsclosedcurves} and why it is not needed in~\cite{DRS}. 

The following result is a straightforward consequence of Proposition~\ref{prop:bandsclosedcurves} and it is a natural generalisation of~\cite[Corollary 5.10]{DRS}. 

\begin{corollary}
A tiling algebra $A_{\sP}$ is of finite representation type if and only if every permissible simple closed curve $c$ is such that $|I_{\sP} (c)| \leq 1$. 
\end{corollary}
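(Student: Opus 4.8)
The plan is to characterise finite representation type via the module category, using the dichotomy that an indecomposable $A_\sP$-module is either a string module or a band module (Butler--Ringel). A gentle algebra is of infinite representation type precisely when it admits a band, since each band $b$ produces an infinite family $M(b,n,\varphi)$, whereas if there are no bands then there are only finitely many strings (a gentle algebra being a string algebra with finitely many vertices has finitely many strings when no cyclic string can be repeated indefinitely). So the task reduces to translating the existence of a band into a geometric condition on permissible closed curves.

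First I would use Proposition~\ref{prop:bandsclosedcurves}, which gives a bijection between homotopy classes of permissible closed curves $c$ with $|I_\sP(c)| \geq 2$ and powers of bands. The key point is that a genuine band $b$ (not a proper power) corresponds to a permissible \emph{simple} closed curve, while its powers $b^m$ correspond to the curve traversed $m$ times. Thus the existence of a band is equivalent to the existence of a permissible simple closed curve $c$ whose associated string is cyclic and whose intersection number satisfies $|I_\sP(c)| \geq 2$. Combining these two steps: $A_\sP$ is of infinite representation type if and only if there exists a band, if and only if there exists a permissible simple closed curve $c$ with $|I_\sP(c)| \geq 2$.

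Contrapositively, $A_\sP$ is of finite representation type if and only if every permissible simple closed curve $c$ satisfies $|I_\sP(c)| \leq 1$, which is exactly the claimed statement. I would therefore structure the proof as: (i) recall that finiteness of representation type is equivalent to the absence of bands; (ii) invoke Proposition~\ref{prop:bandsclosedcurves} to identify bands with permissible simple closed curves of intersection number at least $2$; (iii) note that the remaining permissible simple closed curves --- those with $|I_\sP(c)| \leq 1$, i.e.\ curves homotopic into an unmarked boundary component or crossing a single arc once --- do not yield bands, as remarked after Proposition~\ref{prop:bandsclosedcurves}; (iv) conclude.

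The main obstacle is step (i): justifying cleanly that a gentle (string) algebra is representation-finite exactly when it has no bands. One direction is immediate, since a band gives infinitely many non-isomorphic band modules. For the converse I would argue that if $A_\sP$ has no band, then every cyclic string is either forbidden by a relation at some point or is a proper power, and in a finite quiver with the gentle relations this forces every string to have bounded length; hence there are only finitely many strings and thus finitely many indecomposables. This is the point requiring the most care, but it is standard for string algebras and can be cited to~\cite{Butler-Ringel}, so the geometric translation via Proposition~\ref{prop:bandsclosedcurves} carries the real content of the statement.
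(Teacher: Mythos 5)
Your proposal is correct and follows essentially the same route as the paper: the paper gives no separate proof, declaring the corollary a straightforward consequence of Proposition~\ref{prop:bandsclosedcurves}, and your argument (representation-finiteness of a string algebra is equivalent to the absence of bands, which via Proposition~\ref{prop:bandsclosedcurves} is equivalent to the absence of permissible simple closed curves with intersection number at least $2$) is exactly the intended elaboration of that one-line justification. The only point you assert beyond what the paper records explicitly --- that a band itself (not just its powers) is realised by a \emph{simple} permissible closed curve --- is the same implicit identification the paper makes in formulating the corollary, so your write-up is, if anything, more detailed than the paper's.
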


\begin{example}
Consider the algebras $\Lambda (r, n, m)$ given by the quiver with relations given in Figure~\ref{fig:DDA}.
\begin{figure}
\includegraphics[height=2.5cm]{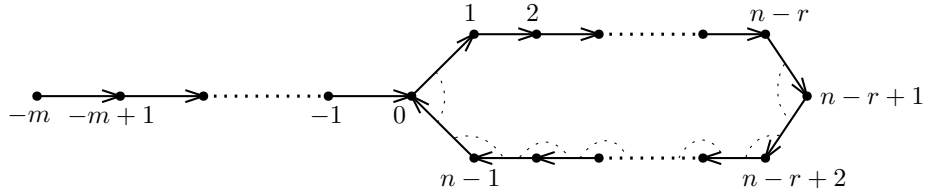}
\caption{The algebra $\Lambda (r, n, m)$.}
\label{fig:DDA}
\end{figure}
Up to derived equivalence, these are all the derived discrete algebras which are not of Dynkin type (cf.~\cite{BGS}), and they are known to be of finite representation type. Such algebras can be seen as tiling algebras associated to the partial triangulations of an annulus given in Figure~\ref{fig:DDAtiling}. 
\begin{figure}
\includegraphics[height=4.5cm]{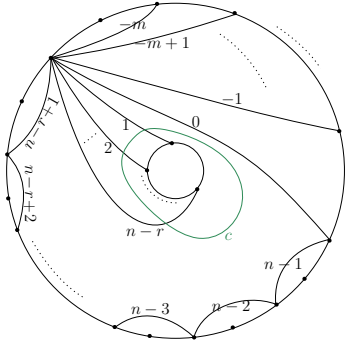}
\caption{Tiling associated to $\Lambda (r, n, m)$.}
\label{fig:DDAtiling}
\end{figure}
As we can see in the figure, either the simple closed curve $c$ is such that $|I_{\sP} (c)| \leq 1$ (if $n-r = 0, 1$) or $|I_{\sP} (c)| \geq 2$ (if $n-r \geq 2$) and it is not permissible. 
\end{example}

\subsection{Pivot elementary moves}
Now we want to give a description of the irreducible morphisms between string modules in terms of pivot elementary moves. 

Given a permissible arc $\gamma$, we denote by $M([\gamma])$ the string module $M(w(\gamma))$, where $w(\gamma)$ is the string associated to $\gamma$. 

\begin{definition}\label{def:pivotmove}
Let $\gamma$ be a permissible arc corresponding to a non-empty string, and let $s$ ($t$, resp.) be its starting point (ending point, resp.). Denote by $\Delta$ the tile in the surface such that the segment $\overline{\gamma}$ of $\gamma$ between $s$ and the first intersection point with $\sP$ lies in the interior of $\Delta$. The {\it pivot elementary move} $f_t([\gamma])$ of $[\gamma]$ is defined to be the equivalence class of the permissible arc $f_t (\gamma_{s})$ obtained from $\gamma$ by fixing the ending point $t$ and moving $s$ in the following way:

{\it Case 1:} $\Delta$ is not of type I or II.

Move $s$ in the counterclockwise direction around $\Delta$ up to the vertex $s'$ for which $\gamma \simeq \gamma_s$, with endpoints $t$ and $s'$, and $\gamma \not\simeq \gamma'$, where $\gamma'$ is homotopic to the concatenation of $\gamma_s$ with the side of $\Delta$ connecting $s'$ with its counterclockwise neighbour. Note that $\gamma_s$ might be $\gamma$ (and so $s' = s$). 

{\it Case 2:} $\Delta$ is of type I.

Let $\gamma_s$ be the permissible arc equivalent to $\gamma$, with starting point $s = s'$ and such that it wraps around the unmarked boundary component in the interior of $\Delta$ in the counterclockwise direction. 

{\it Case 3:} $\Delta$ is of type II. 

Let $\gamma_s$ be the permissible arc equivalent to $\gamma$, with starting point given by one of the two marked points $s'$ of tile $\Delta$, and such that the winding number around the unmarked boundary component $\sigma$ in the interior of $\Delta$ is zero and $\sigma$ is to the left of $\gamma_s$. 

In either case, $f_t (\gamma_{s})$ is defined to be the concatenation of $\gamma_s$ with the boundary segment connecting $s'$ to its counterclockwise neighbour in the boundary component. The pivot elementary move $f_s ([\gamma]) = [f_s(\gamma_t)]$ is defined in a similar way. 
\end{definition}

\begin{theorem}\label{thm:irreduciblepivots}
Let $M(w)$ be a string module over the tiling algebra $A_{\sP}$, with corresponding string $w$. 
\begin{compactenum}
\item Each irreducible morphism in $\mod{A_{\sP}}$ starting at $M(w)$ is obtained 
by a pivot elementary move on an endpoint of the corresponding permissible arc $\gamma = \gamma (w)$.
\item All Auslander-Reiten sequences between string modules in $\mod{A_{\sP}}$ are of the form $0 \rightarrow M([\gamma]) \rightarrow M(f_s(\gamma)) \oplus M(f_t(\gamma)) \rightarrow M(f_s(f_t(\gamma))) \rightarrow 0$, for some permissible arc $\gamma$. 
\end{compactenum}
\end{theorem}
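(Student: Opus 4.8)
The plan is to reduce the geometric assertion to the combinatorial description of irreducible maps between string modules recorded in Proposition~\ref{prop:BRmaps}, and to transport everything through the bijection $[\gamma] \leftrightarrow w(\gamma)$ of Theorem~\ref{thm:permissiblearcsstrings}. Write $w = w(\gamma)$ and orient $\gamma$ from its starting point $s$ (the $\arc{v}_1$-end, corresponding to the source $s(w)$ of the walk) to its ending point $t$ (the $\arc{v}_k$-end, corresponding to $t(w)$). With these conventions, the pivot $f_t$, which fixes $t$ and rotates $s$, should match the Butler--Ringel operation $w \mapsto w_\ell$ performed at the left-hand end of $w$, while $f_s$ should match $w \mapsto w_r$. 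Thus the core of the argument is a single local claim: $w(f_t(\gamma)) = w_\ell$ and, dually, $w(f_s(\gamma)) = w_r$. Granting this, part~(1) is immediate from Proposition~\ref{prop:BRmaps}(1), since the irreducible morphisms out of $M(w)$ are exactly $M(w) \to M(w_\ell)$ and $M(w) \to M(w_r)$; the degenerate case $w_\ell = 0$ corresponds geometrically to $f_t(\gamma)$ being a trivial permissible arc, to which we have assigned the zero string, so that the morphism vanishes and at most one irreducible map survives, as predicted.

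To prove the local claim I would argue case by case on the tile $\Delta$ containing the initial segment $\overline{\gamma}$, matching the three cases of Definition~\ref{def:pivotmove} with the hook/cohook dichotomy of Proposition~\ref{prop:BRmaps}. In Case~1 ($\Delta$ an $m$-gon), rotating $s$ counterclockwise sweeps the end of $\gamma$ across the fan of arcs incident to the neighbouring marked points: if there is an arrow $a$ with $aw$ a string, then the pivoted arc picks up exactly the crossings corresponding to $a$ followed by the maximal direct string $v$ out of $s(a)$, realising the prepending of $v^{-1}a$ (adding a hook, an inclusion); if no such $a$ exists, then $\gamma$ must terminate by crossing a maximal direct initial substring, and the pivot deletes precisely the cohook $va^{-1}$ (a projection), leaving $w'$. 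Cases~2 and~3 ($\Delta$ of type I or II) are handled the same way, the only extra bookkeeping being the winding number around the unmarked boundary component in the interior of $\Delta$: condition (3)(a) forces this number into $\{0,1\}$, and the prescribed counterclockwise wrapping (resp.\ the zero-winding convention with $\sigma$ on the left) is exactly what keeps the pivoted curve permissible and in the correct equivalence class.

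For part~(2), once the local claim is in place the Auslander--Reiten sequence is read off directly from Proposition~\ref{prop:BRmaps}(2): the sequence $0 \to M(w) \to M(w_\ell)\oplus M(w_r) \to M(w_{r,\ell}) \to 0$ translates into $0 \to M([\gamma]) \to M(f_t(\gamma)) \oplus M(f_s(\gamma)) \to M(f_s(f_t(\gamma))) \to 0$. Here I would use that the two pivots act at opposite ends of $\gamma$ and are therefore independent, so that $f_s(f_t(\gamma))$ and $f_t(f_s(\gamma))$ are equivalent; on the combinatorial side this is exactly the identity $(w_\ell)_r = (w_r)_\ell = w_{r,\ell}$ quoted before Proposition~\ref{prop:BRmaps}. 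Band modules play no role, since they lie in homogeneous tubes with $\tau = \mathrm{id}$ and are excluded from the statement.

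The main obstacle is the local claim itself, i.e.\ the verification that the counterclockwise pivot reproduces hooks and cohooks in every tile type. The delicate points are: (i) checking that the sequence of arcs crossed by the pivoted arc as it sweeps through a fan coincides, term by term, with the maximal direct (co)string appearing in Butler--Ringel's recipe, together with the correct identification of inclusion versus projection; and (ii) controlling the winding numbers in the type I and type II tiles so that the output is again a permissible arc and so that the choices left open in Definition~\ref{def:pivotmove} yield a well-defined equivalence class. Everything else is routine bookkeeping through the bijection of Theorem~\ref{thm:permissiblearcsstrings}.
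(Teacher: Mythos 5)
Your proposal follows essentially the same route as the paper: both reduce, via Proposition~\ref{prop:BRmaps} and the bijection of Theorem~\ref{thm:permissiblearcsstrings}, to the local claim $w(f_t(\gamma_s)) = w_\ell$ and $w(f_s(\gamma_t)) = w_r$, prove it by the same case analysis (hook versus cohook, with tiles of type I and II treated separately via winding-number bookkeeping), and deduce part (2) from Proposition~\ref{prop:BRmaps}(2) together with $(w_\ell)_r = (w_r)_\ell$. The only point the paper handles that your orientation prescription glosses over is the case of trivial strings $w = 1_v^{\pm}$, where ``the $\arc{v}_1$-end'' does not determine which endpoint of $\gamma$ plays the role of $s$; the paper resolves this by fixing the sign functions $\sigma, \epsilon$ and choosing the orientation of $\gamma$ according to which of the two tiles adjacent to $\arc{v}$ carries the arrow $\alpha$ with $\epsilon(\alpha) = \pm 1$.
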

\begin{proof}
By Proposition \ref{prop:BRmaps}, we need to prove that $w_\ell = w(f_t(\gamma_s))$ and $w_r = w(f_s(\gamma_t))$. We will only prove the former, as the latter is similar. 

Firstly, we need to define the starting and ending points $s, t$  of the arc $\gamma$ corresponding to $w$. Fix some sign functions $\sigma, \epsilon$ (recall the definition from Section \ref{sec:gentlebackground}). 

If $w$ is not a trivial string, write $w = \xymatrix{v_1 \ar@{<->}[r]^{\alpha_1} & v_2 \ar@{<->}[r]^{\alpha_2} & \cdots \ar@{<->}[r]^{\alpha_{k-1}} & v_k}$, with $k \geq 2$. Then we  orient 
$\gamma$ in such a way that it crosses the arcs $\arc{v}_1, \ldots, \arc{v}_k$ in this order. 

Now suppose, $w$ is a trivial string, and write $w= 1_v^x$, for some vertex $v$ and $x \in \{+,-\}$. If $v$ is a source, then $M(w)$ is an injective module, and $w_\ell = 0 = w_r$. On the other hand, one can easily see that, independently of the orientation of $\gamma$, the pivot elementary moves at either endpoint give rise to permissible arcs with intersection number zero, which finishes the proof for this case. Thus, suppose there is at least one arrow in $Q$ whose target is $v$. 

Suppose $\arc{v}$ is the side of two different tiles $\Delta_0$ and $\Delta_1$. If there is an arrow $\alpha$ with target $v$ such that $\epsilon (\alpha) = x$ (and so $\alpha w$ is a string), assume, without loss of generality, that the arc corresponding to the source of $\alpha$ is a side of $\Delta_0$. Then orient $\gamma$ such that $s$ is a marked point of $\Delta_0$ and $t$ is a marked point of $\Delta_1$. If there is no such arrow, then there is only one arrow $\beta$ with target $v$ and $\epsilon (\beta) = -x$. In this case, if $s(\beta)$ corresponds to a side of $\Delta_0$, we orient $\gamma$ such that $s \in \Delta_1$ and $t \in \Delta_0$. If the two tiles $\Delta_0$ and $\Delta_1$ coincide, we can use a similar argument to orient the arc $\gamma$. 

 Now that we have determined the orientation of $\gamma$, let $\mathtt{x}_1, \ldots, \mathtt{x}_k$ be the arcs of $\sP$ that $\gamma$ crosses in order. The arc $\mathtt{x}_1$ and the marked point $s$ completely determine a tile of the surface, which shall be denoted by $\Delta$. 

Suppose there is an arrow $\alpha$ in $Q_{A_{\sP}}$ such that $\alpha w$ is a string. Note that this arrow is unique, given the sign functions $\sigma, \epsilon$ and the notion of composition of strings. 

{\it Case 1:} $\Delta$ is not of type I or II. 

By assumption, there is one side $\arc{x}_0$ of $\Delta$, which is not a boundary segment and it has one endpoint in common with $\arc{x}_1$: the marked point corresponding to $\alpha$. Moreover, the arc $\gamma_s$ is the permissible arc obtained from the concatenation of $\gamma$ with the sides of $\Delta$ when travelling counterclockwise around $\Delta$ from $s$ to $s'$, the endpoint of $\arc{x}_0$ not in common with $\arc{x}_1$. Clearly, $\gamma_s \simeq \gamma$. Figure~\ref{fig:pivot-1} 
illustrates this case. 

\begin{figure}[H]
\includegraphics[height=3cm]{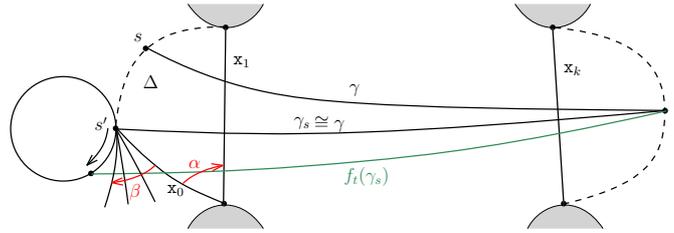}
\caption{Pivot elementary move which adds a hook - Case 1.}\label{fig:pivot-1}
\end{figure}

The arc $f_t (\gamma_s)$ will then cross the fan at $s'$ starting at $\mathtt{x}_0$, 
together with $\mathtt{x}_1, \ldots, \mathtt{x}_k$, as Figure~\ref{fig:pivot-1} shows. 
Hence $f_t (\gamma_s)$ is clearly permissible and the corresponding string is $w(f_t(\gamma_s)) = \beta^{-1} \alpha w$, where $\beta$ is the path corresponding 
to the fan at $s'$ starting at $\mathtt{x}_0$. Thus, $w (f_t(\gamma_s)) = w_\ell$. 

{\it Case 2:} $\Delta$ is of type I.

By construction, the loop arrow at $x_1$ is not the first arrow in string $w$. Given the orientation of $\gamma$, $\alpha$ must be the loop at $x_1$. Let $\gamma_s$ be as in Definition \ref{def:pivotmove}, and let $\arc{y}_1, \ldots, \arc{y}_s$ be the sequence of arcs of $\sP$ incident with $s = s'$ such that there is a string $\beta = x_1 \rightarrow y_1 \rightarrow \cdots \rightarrow y_s$. Thus, $w(f_t(\gamma_s)) = \beta^{-1} \alpha w = w_\ell$ 
(see Figure~\ref{fig:pivot-2}). 
\begin{figure}[H]
\includegraphics[height=3cm]{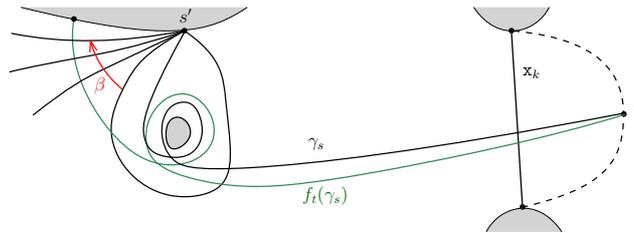}
\caption{Pivot elementary move which adds a hook - Case 2.}\label{fig:pivot-2}
\end{figure}
{\it Case 3:} $\Delta$ is of type II. 

In this case, $\gamma \simeq \gamma_s$, where $\gamma_s$ is as described in Definition \ref{def:pivotmove}. Denote by $x_0$ the other arc of $\Delta$, and let $p_1$ be the starting point of $\gamma_s$ and $p_2$ be the other marked point of $\Delta$. Then $\alpha$ is the arrow from $x_0$ to $x_1$ with corresponding marked point $p_2$. Let $\beta$ be the path associated to the fan at $p_1$ starting at $\arc{x}_0$. Then, $w(f_t(\gamma_s))$ is again $w_\ell$. See Figure~\ref{fig:pivot-3}. 
\begin{figure}[H]
\includegraphics[height=3cm]{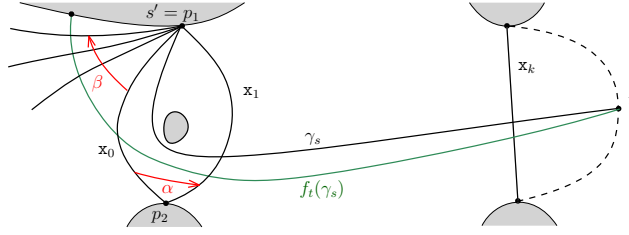}
\caption{Pivot elementary move which adds a hook - Case 3.}\label{fig:pivot-3}
\end{figure}

Finally, suppose there is no arrow $\alpha \in Q_{A_{\sP}}$ such that $\alpha w$ is a string. Then, in particular, $\Delta$ cannot be of type I nor of type II.

Let $p_1$ and $p_2$ be the endpoints of the arc $x_1$ such that $p_1, p_2, s$ and $s'$ follow each other in the counterclockwise order around the border of $\Delta$, where $s'$ is the starting point of $\gamma_s$. By assumption, $\Delta$ has a side with endpoints $s'$ and $p_1$, which is a boundary segment. Thus, $f_t(\gamma_s)$ has endpoints $p_1$ and $t$, and it only crosses $\arc{x}_\ell, \arc{x}_{\ell+1}, \ldots, \arc{x}_k$, where $\arc{x}_\ell$ is the first arc of the sequence $\arc{x}_1, \ldots, \arc{x}_k$ for which the substring $\xymatrix{x_1 \ar@{<->}[r]^{\alpha_1} & x_2 \ar@{<->}[r]^{\alpha_2} & \cdots \ar@{<->}[r]^{\alpha_{\ell-2}} & x_{\ell -1}}$ is direct and $\alpha_{\ell-1}\colon x_{\ell} \rightarrow x_{\ell-1}$. In particular, if $w$ is direct, then $f_t(\gamma_s)$ does not cross any arc in $\sP$. Hence, $w(f_t(\gamma_s)) = w_\ell$. Figure~\ref{fig:pivot-4} illustrates this argument.
\begin{figure}[H]
\includegraphics[height=3.2cm]{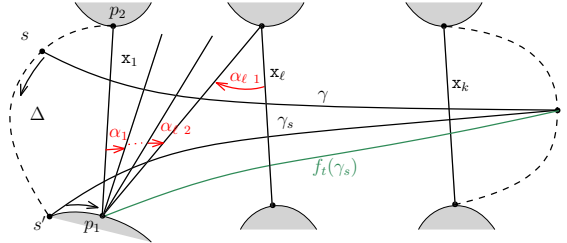}
\caption{Pivot elementary move which removes a cohook.}\label{fig:pivot-4}
\end{figure}

To finish the proof, we need to show that $w(f_s (f_t(\gamma_s))_t) = (w_\ell)_r$. But, by the first part of the proof (and its dual), we have that $w(f_t(\gamma_s)) = w_\ell$ and $w(f_s(f_t(\gamma_s))_t) = (w(f_t(\gamma_s))_t)_r = (w_\ell)_r$. 
\end{proof}

\begin{remark}
Given a permissible arc, denote by $\gamma_{s,t}$ the arc $(\gamma_s)_t$. Note that the construction of $\gamma_s$ ($\gamma_t$, resp.) does not affect the ending point (starting point, resp.) of $\gamma$. Hence $(\gamma_t)_s = (\gamma_s)_t$. Moreover, the ending points (starting points, resp.) of $f_t(\gamma_s)$ and $\gamma_s$ ($f_s(\gamma_t)$ and $\gamma_t$, resp.) coincide. Therefore, $f_t (\gamma_s) = f_t (\gamma_{s,t})$, and $f_s (\gamma_t) = f_s (\gamma_{s,t})$. 
\end{remark}

Given a permissible arc $\gamma$, we define $\tau^{-1} ([\gamma])$ to be the equivalence class of the arc, which we denote by $\tau^{-1} (\gamma_{s,t})$, obtained from $\gamma_{s,t}$ by simultaneous rotation in the counterclockwise direction of its starting and ending points to the next marked points at the boundary. Note that $\tau^{-1} (\gamma_{s,t})$ is again a permissible arc. 

\begin{corollary}
Let $M([\gamma])$ be a string module. We have that $M([\gamma])$ is non-injective if and only if $|I_{\sP} (\tau^{-1} (\gamma_{s,t}))| \neq 0$. Moreover, in this case, we have $\tau^{-1} (M([\gamma])) = M(\tau^{-1} ([\gamma]))$. 
\end{corollary}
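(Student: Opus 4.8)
The plan is to reduce both assertions to the combinatorial Auslander--Reiten formula of Proposition~\ref{prop:BRmaps} together with its translation into pivot moves in Theorem~\ref{thm:irreduciblepivots}. Write $M([\gamma]) = M(w)$ with $w = w(\gamma)$. The central identity I would establish is
\[
w_{r,\ell} = w\bigl(\tau^{-1}(\gamma_{s,t})\bigr),
\]
valid for \emph{every} string $w$, independently of injectivity. Once this is available, both claims follow quickly.

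First I would prove this identity. By Theorem~\ref{thm:irreduciblepivots} we have $w_\ell = w(f_t(\gamma))$, and the cokernel term of the Auslander--Reiten sequence is $w_{r,\ell} = w\bigl(f_s(f_t(\gamma))\bigr)$; moreover, by the remark above, $f_t(\gamma) = f_t(\gamma_{s,t})$ and $f_s(\gamma) = f_s(\gamma_{s,t})$, so that both pivots may be taken on the canonical representative $\gamma_{s,t}$. The geometric content is then that applying the two pivot moves one after the other realises the simultaneous counterclockwise rotation defining $\tau^{-1}$: the move $f_t$ rotates only the starting point of $\gamma_{s,t}$ counterclockwise to its neighbour, $f_s$ rotates only the ending point, and since $\gamma_{s,t}$ is already stable under both the $s$- and the $t$-constructions, the second pivot performs no further hook-addition or cohook-removal. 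Hence $f_s(f_t(\gamma_{s,t})) \simeq \tau^{-1}(\gamma_{s,t})$, which yields the displayed identity.

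Next, for the equivalence in the first statement, I would combine two facts. On the module side, it is standard (cf.~\cite{Butler-Ringel}, and consistent with Proposition~\ref{prop:BRmaps}) that a string module $M(w)$ is injective precisely when $w_{r,\ell} = 0$: if $M(w)$ is non-injective then $\tau^{-1}(M(w)) = M(w_{r,\ell})$ is a nonzero indecomposable, forcing $w_{r,\ell} \neq 0$, while the converse is the usual characterisation of injective string modules. On the geometric side, Theorem~\ref{thm:permissiblearcsstrings} identifies the intersection vector with the dimension vector, so $|I_\sP(\tau^{-1}(\gamma_{s,t}))|$ equals the total dimension of $M(w_{r,\ell})$, which vanishes exactly when $w_{r,\ell} = 0$. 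Putting these together, $M([\gamma])$ is non-injective if and only if $w_{r,\ell} \neq 0$, if and only if $|I_\sP(\tau^{-1}(\gamma_{s,t}))| \neq 0$.

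Finally, the ``moreover'' part is immediate: when $M([\gamma])$ is non-injective, Proposition~\ref{prop:BRmaps} gives $\tau^{-1}(M([\gamma])) = M(w_{r,\ell})$, and the central identity rewrites this as $M\bigl(w(\tau^{-1}(\gamma_{s,t}))\bigr) = M(\tau^{-1}([\gamma]))$. I expect the main obstacle to be the geometric identity $f_s(f_t(\gamma_{s,t})) \simeq \tau^{-1}(\gamma_{s,t})$: one must verify, in each tile type (in particular the tiles of type I and II, and the case where the two endpoints of $\gamma$ lie on a single tile), that the two single-endpoint rotations genuinely commute and do not interfere with one another, so that their composite is exactly the simultaneous rotation. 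The remaining steps are bookkeeping with results already established.
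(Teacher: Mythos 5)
Your overall strategy is the one the paper uses: the non-injective direction and the ``moreover'' part both rest on the identification $[f_s f_t(\gamma_{s,t})] = \tau^{-1}([\gamma])$ combined with Theorem~\ref{thm:irreduciblepivots}(2) and Proposition~\ref{prop:BRmaps}(2), and the passage from ``non-zero string'' to ``non-zero intersection number'' via Theorem~\ref{thm:permissiblearcsstrings}. For that half of the statement your argument is essentially the paper's proof, written out in slightly more detail.

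The gap is in the injective direction, i.e.\ the claim that your central identity $w_{r,\ell} = w(\tau^{-1}(\gamma_{s,t}))$ holds ``for every string $w$, independently of injectivity.'' Your justification of it invokes the cokernel term of the Auslander--Reiten sequence starting at $M(w)$, which exists only when $M(w)$ is non-injective; so as written the identity is only established in the non-injective case, which is exactly the case where you do not need it for the equivalence. Worse, the identity cannot in general be rescued by the geometric equality $f_s(f_t(\gamma_{s,t})) \simeq \tau^{-1}(\gamma_{s,t})$, because when $M(w)$ is injective and $w$ is a direct string (for instance a projective-injective string module, or a trivial string whose vertex is a source of the quiver) one has $w_\ell = 0$, so $f_t(\gamma_{s,t})$ is a \emph{trivial} permissible arc, and Definition~\ref{def:pivotmove} only defines pivot elementary moves on arcs corresponding to non-empty strings: the composite $f_s(f_t(\gamma_{s,t}))$ is simply not defined there, while $\tau^{-1}(\gamma_{s,t})$ (a simultaneous rotation, defined for any permissible arc) still is. This is precisely why the paper treats the injective case separately and directly: if $M([\gamma])$ is injective then there are no arrows $\alpha,\beta$ with $\alpha w$ or $w\beta$ a string, from which one deduces geometrically that $w_{r,\ell}$ is the empty string and that the arc obtained by rotating both endpoints of $\gamma_{s,t}$ counterclockwise can be homotoped off all arcs of $\sP$, i.e.\ $|I_{\sP}(\tau^{-1}(\gamma_{s,t}))| = 0$. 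You need to supply this direct argument (or an alternative proof of the identity covering injective strings); with that added, the rest of your proof goes through.
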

\begin{proof}
If follows from the notions of pivot elementary moves that $\tau^{-1} ([\gamma]) = [f_s f_t (\gamma_{s,t})] = [f_t f_s(\gamma_{s,t})]$. By Theorem \ref{thm:irreduciblepivots} (2), we have $M([f_t f_s (\gamma_{s,t})]) = M(\tau^{-1} (M([\gamma])))$, in the case when $M([\gamma])$ is not injective. On the other hand, if $M([\gamma])$ is injective then the corresponding string $w$ is such that there are no arrows $\alpha, \beta$ in $A_\sP$ for which $\alpha w$ or $w \beta$ are strings. One can easily deduce that $w_{r, \ell}$ is the empty string, and that $|I_{\sP} (\tau^{-1} (\gamma_{s,t}))| = 0$.
\end{proof}

\begin{remark}
We need to use the arc $\gamma_{s,t}$ to compute the Auslander-Reiten translate of a module, as 
we may have $[\tau^{-1} (\gamma)] \neq [\tau^{-1} (\gamma_{s,t})]$, 
where $\tau^{-1} (\gamma)$ is defined in the same way as $\tau^{-1} (\gamma_{s,t})$. See 
Figure~\ref{fig:AR-translate} for an example. 

\begin{figure}
\includegraphics[height=5.8cm]{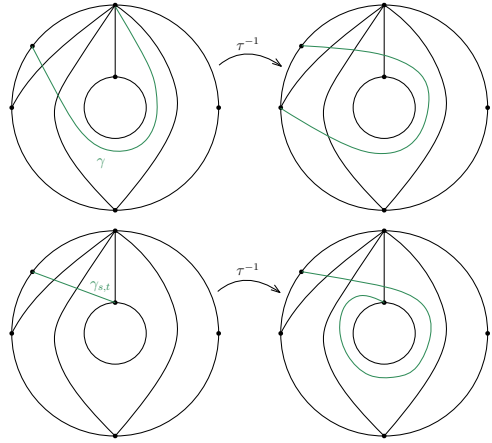}
\caption{Arcs $\gamma$, $\gamma_{s,t}$ and their images under $\tau^{-1}$.}
\label{fig:AR-translate}
\end{figure}
\end{remark}

\subsection{Reading morphisms from curves}

In the previous subsection, we completely described the irreducible morphisms between string modules in terms of pivot elementary moves. However, every morphism between indecomposable modules of a gentle algebra have been previously described~\cite{CB,Krause}. In this subsection we will translate this result in terms of our combinatorial model. 

Let $w$ be a string (which can be a band). A decomposition of $w$ of the form $w=w_1 a^{-1} e b w_2$, where $a, b \in Q_1$, and $w_1, e, w_2$ are strings, is called a {\it factor string}. The set of factor strings of $w$ is denoted by $\Fac (w)$. Similarly, a decomposition of $w$ of the form $w=w_1 c e d^{-1} w_2$, where $c, d \in Q_1$, and $w_1, w_2$ are strings, is called a {\it substring}. The set of substrings of $w$ is denoted by $\Sub (w)$. 

Band modules of the form $M(b, 1, \varphi)$ lie at the mouth of homogeneous tubes, and are therefore called {\it quasi-simple band modules}. 

\begin{theorem}\cite{CB, Krause}\label{thm:homs}
Let v, w be strings or bands, and $M(v), M(w)$ be the corresponding string or quasi-simple band modules. Then $\dim_\kk \, \Hom_A (M(v), M(w)) = |\{(v_1 a^{-1} e b v_2, w_1 c f d^{-1} w_2) \in \Fac (v) \times \Sub (w) \mid f = e \text{ or } f = e^{-1} \}|$.
\end{theorem}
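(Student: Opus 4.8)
The plan is to realise the right-hand count as the cardinality of an explicit basis of $\Hom_A(M(v),M(w))$, following the \emph{graph map} construction of Crawley--Boevey and Krause specialised to our (gentle, hence string) algebra $A = A_\sP$. Recall that $M(v)$ carries a canonical $\kk$-basis indexed by the vertices visited by the walk $v$, with the arrows of $Q$ acting by matching consecutive basis vectors according to the direct and inverse letters of $v$; likewise for $M(w)$. The names $\Fac$ and $\Sub$ are chosen precisely so that a factor string $v = v_1 a^{-1} e b v_2$ (with $a,b \in Q_1$) exhibits $M(e)$ as a canonical quotient of $M(v)$, while a substring $w = w_1 c f d^{-1} w_2$ (with $c,d \in Q_1$) exhibits $M(f)$ as a canonical submodule of $M(w)$.

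Given an admissible pair, i.e.\ a factor string of $v$ and a substring of $w$ with $f=e$ or $f=e^{-1}$, I would attach the \emph{graph map}
$$M(v) \twoheadrightarrow M(e) \xrightarrow{\ \sim\ } M(f) \hookrightarrow M(w),$$
where the outer maps are the canonical projection and inclusion above and the middle isomorphism is the identification $M(e) \cong M(f)$ arising from $f = e$ or $f = e^{-1}$. Each composite is visibly an $A$-homomorphism, and the assignment is injective on admissible pairs, so the claim reduces to showing that these graph maps form a $\kk$-basis of $\Hom_A(M(v),M(w))$.

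Linear independence is the easier half: expressing each graph map as a matrix in the canonical bases of $M(v)$ and $M(w)$, one checks that distinct admissible pairs contribute nonzero entries in distinct positions, so a triangularity argument precludes any nontrivial relation. The spanning statement --- that \emph{every} $\phi \in \Hom_A(M(v),M(w))$ is a combination of graph maps --- is the genuine obstacle. Here I would run Krause's functorial-filtration argument: one filters $M(v)$ and $M(w)$ by the canonical subquotients read off from the string combinatorics, verifies that any homomorphism is forced to respect this filtration, and then peels off a single graph map at each step, shortening the relevant walk until nothing remains. The gentle conditions (G1)--(G4) are what make this terminate cleanly: at every vertex at most two arrows enter and leave and the admissible compositions are uniquely determined, so the commutativity constraints that the arrow actions impose on $\phi$ admit no matchings beyond those recorded by the admissible pairs.

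Finally, the band case, where $M(v)$ or $M(w)$ is a quasi-simple band module $M(b,1,\varphi)$, needs only a short supplementary check. Since the underlying space at each vertex is one-dimensional, the automorphism $\varphi$ is a nonzero scalar, and after fixing a starting letter the module behaves like a string module whose ends are glued by that scalar. One verifies that wrapping a factor string or substring across this gluing neither creates nor destroys admissible pairs, so the count is unchanged; I expect this reduction, like the linear independence, to be routine once the graph maps are correctly set up, with the spanning step remaining the only substantial point.
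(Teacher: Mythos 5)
This theorem is not proved in the paper at all --- it is quoted directly from Crawley-Boevey~\cite{CB} and Krause~\cite{Krause} --- and your sketch reproduces exactly the argument of those sources: ``graph maps'' (canonical projection onto $M(e)$, identification $M(e)\cong M(f)$, canonical inclusion of $M(f)$) indexed by admissible pairs in $\Fac(v)\times\Sub(w)$, shown to form a basis via a linear-independence argument plus a functorial-filtration spanning argument. So your approach coincides with the cited proof; the one place where your sketch (like the paper's own statement) is thinner than the references is the band case, where Krause must track the scalar $\varphi$ --- for instance, for two quasi-simple modules on the same band the ``full wrap'' contributes a homomorphism only when the scalars of source and target agree, so the count is not literally scalar-independent there.
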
 

Pairs in $\Fac (v) \times \Sub (w)$ satisfying the condition stated in the previous theorem are called {\it admissible pairs}. 

\begin{definition}
Let $\gamma$ be a permissible arc or closed curve corresponding to a string or band $w$. Write $\gamma$ as the concatenation of segments $\gamma = \gamma_1 \widetilde{\gamma} \gamma_2$, and let $x$ ($y$, resp.) be the connecting point of $\gamma_1$ ($\gamma_2$, resp.) with $\widetilde{\gamma}$, and assume $x$ lies in the interior of the surface, but does not belong to an arc in $\sP$.  Let $\arc{v}_1, \arc{v}_2, \ldots, \arc{v}_k$ be the arcs in $\sP$ that $\gamma$ cross, in this order. Assume $|I_\sP (\widetilde{\gamma})| \neq 0$ and let $\arc{v}_i, \ldots, \arc{v}_j$ be the arcs crossed by $\widetilde{\gamma}$.
\begin{compactenum}
\item The segment $\tilde{\gamma}$ is said to be a {\it clockwise admissible segment} if it satisfies the following two conditions:  
\begin{compactitem}
\item Either $\arc{v}_i = \arc{v}_1$ or there is an arrow $\alpha\colon v_{i-1} \rightarrow v_i$ in $Q_1$ and a triangle of the form:
\[
\includegraphics[width=4cm]{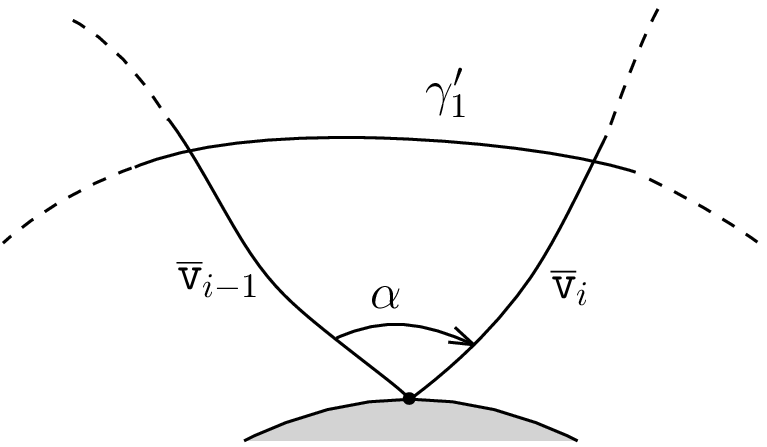}
\]
where $\gamma'_1$ is the segment of $\gamma$ between the intersection points of $\gamma$ with $\arc{v}_{i-1}$ and $\arc{v}_i$. 
\item Either $\arc{v}_j = \arc{v}_k$ or there is an arrow $\beta\colon v_{j+1} \rightarrow v_j$ in $Q_1$ and a triangle of the form:
\[
\includegraphics[width=4cm]{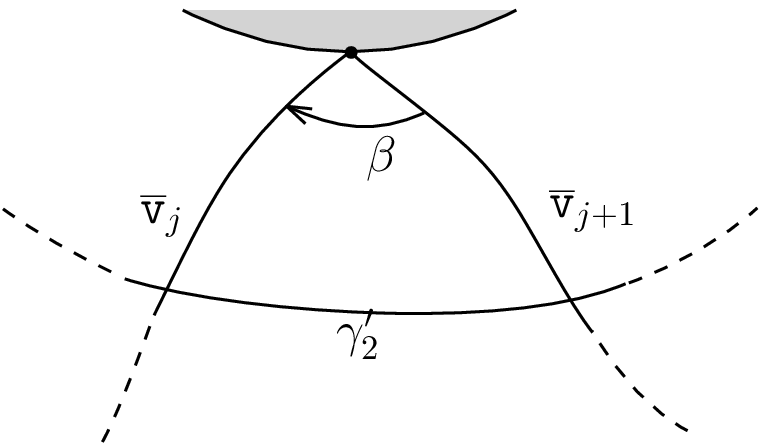}
\]
where $\gamma'_2$ is the segment of $\gamma$ between the intersection points of $\gamma$ with $\arc{v}_{j}$ and $\arc{v}_{j+1}$. 
\end{compactitem}
\item The notion of {\it anticlockwise admissible segment} is obtained from the notion above by changing the orientation of the arrows $\alpha$ and $\beta$. 
\end{compactenum}
\end{definition}

Figure~\ref{fig:admissiblesegment} gives an illustration of this concept. 

\begin{figure}
\includegraphics[height=4.5cm]{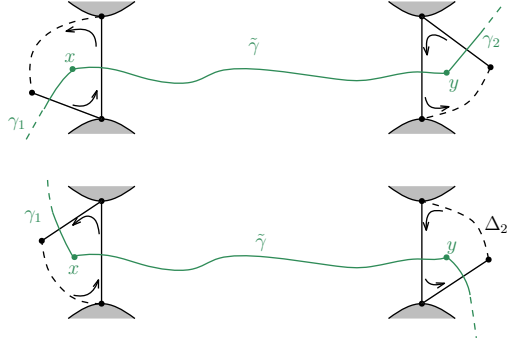}
\caption{Top: $\widetilde{\gamma}$ is clockwise admissible. Bottom: $\widetilde{\gamma}$ is anticlockwise admissible.}
\label{fig:admissiblesegment}
\end{figure}

The next result describes the dimensions of Hom-spaces between two indecomposable modules in terms of their corresponding curves. The proof requires a simple interpretation of Theorem~\ref{thm:homs} in terms of our geometric model and so is omitted.

\begin{proposition}
The dimension of the Hom-space $\Hom_\sA (M(w), M(v))$ is given by the number of pairs $(\widetilde{\gamma (w)}, \widetilde{\gamma (v)})$ of homotopic segments such that $I_\sP (\widetilde{\gamma (w)}) = I_\sP (\widetilde{\gamma (v)})$, $\widetilde{\gamma (w)}$ is anticlockwise admissible and $\widetilde{\gamma (v)}$ is clockwise admissible. 
\end{proposition}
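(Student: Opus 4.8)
The plan is to reinterpret Theorem~\ref{thm:homs} geometrically, reading its statement with $w$ in the role of $v$ and $v$ in the role of $w$, so that $\dim_\kk \Hom_\sA(M(w), M(v))$ equals the number of admissible pairs $(w_1 a^{-1} e b w_2,\, v_1 c f d^{-1} v_2) \in \Fac(w) \times \Sub(v)$ with $f = e$ or $f = e^{-1}$. The whole proof then amounts to matching these admissible pairs bijectively with the pairs of homotopic admissible segments described in the statement. Recall that a factor string $w = w_1 a^{-1} e b w_2$ singles out the substring $e$ with $M(e)$ a quotient of $M(w)$, the letters $a, b$ being genuine arrows pointing away from the two ends of $e$; dually, a substring $v = v_1 c f d^{-1} v_2$ singles out $f$ with $M(f)$ a submodule of $M(v)$, the letters $c, d$ being genuine arrows pointing into the two ends of $f$.

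First I would invoke the bijection of Theorem~\ref{thm:permissiblearcsstrings} between the vertices of a string and the consecutive crossings of its permissible arc with $\sP$. Under this correspondence, every decomposition of $w$ selecting a consecutive substring $e$ is matched with a segment $\widetilde{\gamma(w)}$ of $\gamma(w)$ crossing exactly the arcs indexed by the vertices of $e$, and conversely. It then remains to check that the arrow directions at the two ends of $e$ translate precisely into the admissibility conditions at the ends of $\widetilde{\gamma(w)}$: the requirement that $a$ and $b$ be genuine arrows leaving the boundary vertices of $e$ corresponds, in the local triangle picture used to define admissible segments, to the arrows $\alpha$ and $\beta$ having the orientation prescribed for an \emph{anticlockwise} admissible segment, while the boundary cases $\arc{v}_i = \arc{v}_1$ and $\arc{v}_j = \arc{v}_k$ account for $w_1$ or $w_2$ being trivial. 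Carrying out the dual analysis for $v = v_1 c f d^{-1} v_2$, where $c, d$ point into $f$, yields the clockwise orientation, so factor strings of $w$ correspond to anticlockwise admissible segments of $\gamma(w)$ and substrings of $v$ to clockwise admissible segments of $\gamma(v)$.

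Finally I would identify the matching condition $f = e$ or $f = e^{-1}$ with the geometric requirement that $\widetilde{\gamma(w)}$ and $\widetilde{\gamma(v)}$ be homotopic with $I_\sP(\widetilde{\gamma(w)}) = I_\sP(\widetilde{\gamma(v)})$. Indeed, two segments crossing the same arcs of $\sP$ in the same order through the same sequence of tiles are homotopic, and since the crossings are minimal the intersection vector records exactly the multiset of crossed arcs; hence equality of the two intersection vectors together with homotopy holds if and only if the underlying strings $e$ and $f$ coincide up to inversion, which is precisely $f = e$ or $f = e^{-1}$. Assembling the two bijections of the previous paragraph with this identification produces a bijection between admissible pairs and pairs of homotopic admissible segments, and the dimension formula follows.

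I expect the main obstacle to be the careful bookkeeping of orientations and of the degenerate boundary cases. Matching \emph{genuine arrow pointing away from / into $e$} with the clockwise/anticlockwise conventions requires tracking the orientation of $\gamma$ exactly as in the proof of Theorem~\ref{thm:irreduciblepivots}, and one must treat separately the end pieces where $w_1, w_2$ (or $v_1, v_2$) are trivial strings, together with the sign-function conventions governing composition with trivial strings; this is routine but error-prone, which is presumably why the detailed argument is omitted.
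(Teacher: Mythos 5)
Your proposal is correct and takes essentially the same approach the paper intends: the paper omits the detailed argument, saying only that the proposition follows from a ``simple interpretation of Theorem~\ref{thm:homs} in terms of our geometric model'', and that interpretation is exactly the bijection you construct (factor strings of $w$ matched with anticlockwise admissible segments of $\gamma(w)$, substrings of $v$ with clockwise admissible segments of $\gamma(v)$, and the condition $f=e$ or $f=e^{-1}$ with homotopy plus equality of intersection vectors). Your orientation bookkeeping agrees with the paper's conventions (arrows pointing out of $e$ for factor strings versus into $f$ for substrings), so your sketch in fact supplies more detail than the published text.
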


Note that the intersection of $\widetilde{\gamma (w)}$ with $\sP$ gives rise to the string $e$ in Theorem~\ref{thm:homs}. 

\begin{example}
Consider the tiling algebra $\sA = \kk (\xymatrix
{1 \ar@<.3ex>@{->}[r]^a  & 2 \ar@{->}[r]^c \ar@<.3ex>@{->}[l]^b & 3 \ar@(ur,dr)^d  }
)/\langle ab, ba, d^2 \rangle$ given by the tiling in Figure \ref{fig:exampleB}. 
\begin{figure}
\includegraphics[height=3cm]{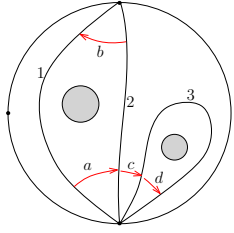}
\caption{Tiling of $\sA$}
\label{fig:exampleB}
\end{figure}

Let $w = b^{-1}cdc^{-1}b$. Then $\dim \, \Hom_\sA (M(w), M(w)) = 2$, as the admissible pairs in $\Fac (w) \times \Sub (w)$ are $(e=w, e= w)$ and $(ed(c^{-1}b), (b^{-1}c)de^{-1})$, where $e = b^{-1}c$ in the latter. Figure \ref{fig:morphismssegments} shows the corresponding homotopic admissible segments. 
\begin{figure}
\includegraphics[height=3cm]{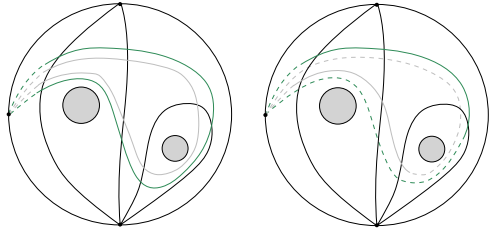}
\caption{Left: $e = w$. Right: $e = b^{-1} c$.}
\label{fig:morphismssegments}
\end{figure}
\end{example}

\begin{remark}
Let $M(w)$ and $M(v)$ be two string modules. If there is a non-zero morphism between $M(v)$ and $M(w)$, then the corresponding arcs $(\gamma (w))_{s,t}$ and $(\gamma (v))_{s,t}$ cross each other, either in the interior of $\sS$ or at one of the endpoints. However, the converse is not true. 
\end{remark}

\subsection{Example}
We finish this section with an example of our model for a representation finite gentle algebra. 
Consider the following tiling algebra: 
\[
\includegraphics[height=3.5cm]{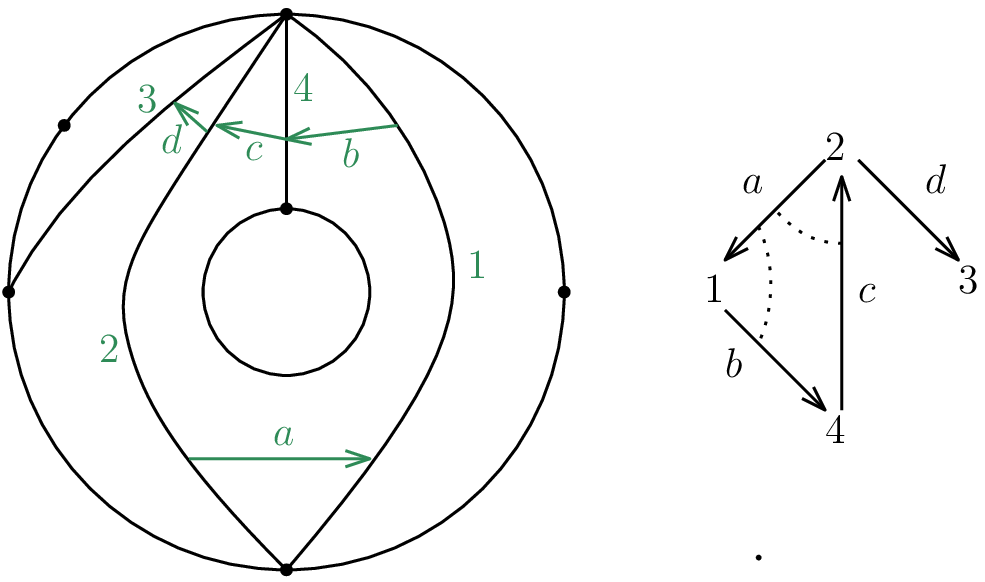}
\]
The tiling algebra is the path algebra over the quiver on the right, subject to the relations $ca=ab=0$. 
The Auslander-Reiten quiver of this gentle algebra is as follows: 
\[
\xymatrix@C-1pc@R-1pc{
 && && bcd\ar[rd] \\
 & && cd\ar@{--}[rr]\ar[rd]\ar[ru] && bc\ar[rd] \\
1\ar@{--}[rr]\ar[rd] && d\ar@{--}[rr]\ar[rd]\ar[ru] && c\ar@{--}[rr]\ar[rd]\ar[ru] && b\ar[rd] \\ 
 & a^{-1}d\ar@{--}[rr]\ar[rd]\ar[ru] && 2\ar@{--}[rr]\ar[ru] && 4\ar@{--}[rr]\ar[ru] && 1 \\
 3\ar@{--}[rr]\ar[ru] && a^{-1}\ar[ru] 
}
\]
In terms of arcs in the annulus, the Auslander-Reiten quiver of the algebra is as in Figure~\ref{fig:ex-curves}, where we chose the arc $\gamma_{s,t}$ as representative for each equivalence class $[\gamma]$.  
\begin{figure}
\includegraphics[width=11cm]{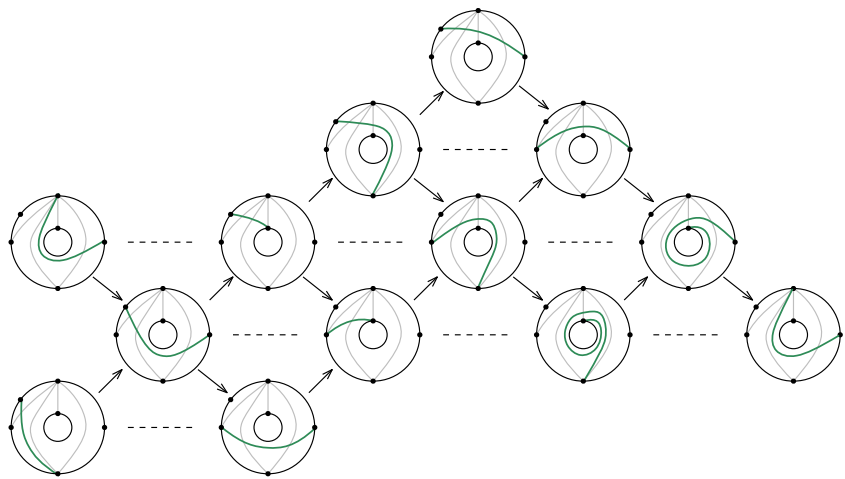}
\caption{Auslander-Reiten quiver in terms of our geometric model.}
\label{fig:ex-curves}
\end{figure}

\subsection*{Acknowledgments} 

RCS thanks Funda\c{c}\~ao para a Ci\^encia e Tecnologia for financial support through Grant SFRH/BPD/90538/2012 and project UID/MAT/04721/2013. 
KB would like to thank the FWF for financial support through grants 30549-N26 and W1230, and University of Leeds for support during a research stay.
Both authors are grateful to Mark J. Parsons for discussions at an early stage of this project. 




\begin{thebibliography}{99}

\bibitem{Amiot}
Amiot, C., Cluster categories for algebras of global dimension 2 and quivers with potential, {\it Ann. Inst. Fourier (Grenoble)} {\bf 59}, no. 6 (2009), 2525--2590. 

\bibitem{ABCP}
Assem, I., Br\"ustle, Charbonneau-Jodoin, G., Plamondon, P-G., Gentle algebras arising from surface triangulations; {\it Algebra and Number Theory} {\bf 4}, no. 2 (2010), 201--229.

\bibitem{AS}
Assem, I., Skowro\'nski, Iterated tilted algebras of type $\tilde{A}_n$, {\it Math. Z.} {\bf 195} (1987), 269--290. 

\bibitem{BaM}
Baur, K., Marsh, R. J., A geometric description of $m$-cluster categories, {\it Trans. Amer. Math. Soc.} {\bf 360}, no. 11 (2008), 5789--5803.

\bibitem{BBM}
Baur, K., Buan, A. B., Marsh, R. J., Torsion pairs and rigid objects in tubes, {\it Algebr. Represent. Theory} {\bf 17}, no. 2 (2014), 565--591.

\bibitem{BaT}
Baur, K., Torkildsen, H., A geometric realization of tame categories; preprint \harxiv{1502.06489}.

\bibitem{BGS}
Bobi\'nski, G., Gei\ss, C., Skowro\'nski, A., Classification of discrete derived categories, {\it Cent. Eur. J. Math.} {\bf 2} (2004), 19--49. 

\bibitem{B}
Broomhead, N., Dimer models and Calabi-Yau algebras, {\it Mem. Amer. Math. Soc.} {\bf 215}, no. 1011 (2012), viii+86 pp.

\bibitem{Broomhead}
Broomhead, N., Thick subcategories of discrete derived categories, \harxiv{1608.06904}. 

\bibitem{BDMTY}
Br\"ustle, T., Douville, G., Mousavand, K., Thomas, H., Yildirim, E., On the combinatorics of gentle algebras, \harxiv{1707.07665}.

\bibitem{BZ} 
Br\"ustle, T., Zhang, J., On the cluster category of a marked surface without punctures, {\it Algebra Number Theory} {\bf 5}, no. 4 (2011), 529--566. 

\bibitem{Butler-Ringel}
Butler, M. C. R., Ringel, C. M., Auslander-Reiten sequences with few middle terms and applications to string algebras; {\it Comm. Algebra} {\bf 15} (1987), 145--179. 

\bibitem{CCS}
Caldero, P., Chapoton, F., Schiffler, R., Quivers with relations arising from clusters ($A_n$ case), {\it Trans. Amer. Math. Soc.} {\bf 358}, no. 3 (2006), 1347--1364. 

\bibitem{SC}
Canakci, I., Schroll, S., Extensions in Jacobian algebras and cluster categories of marked surfaces. With an appendix by Claire Amiot. {\it Adv. Math.} {\bf 313} (2017), 1--49. 

\bibitem{CPS} 
Canakci, I., Pauksztello, D., Schroll, S., Mapping cones in the bounded derived category of a gentle algebra; preprint \harxiv{1609.09688}. 

\bibitem{CSP}
Coelho Sim\~oes, R., Pauksztello, D., Torsion pairs in a triangulated category generated by a spherical object, {\it J. Algebra} {\bf 448} (2016), 1--47.

\bibitem{CSPs}
Coelho Sim\~oes, R., Parsons, M. J., Endomorphism algebras for a class of negative Calabi-Yau categories; {\it J. Algebra} {\bf 491} (2017), 32--57. 

\bibitem{CB}
Crawley-Boevey, W. W., Maps between representations of zero-relation algebras; {\it J. Algebra} {\bf 126} (1989), 259--263. 

\bibitem{DRS}
David-Roesler, L., Schiffler, R., Algebras from surfaces without punctures; {\it J. Algebra} {\bf 350} (2012), 218--244. 

\bibitem{Demonet}
Demonet, L., Algebras of partial triangulations; preprint \harxiv{1602.01592}.

\bibitem{FST} 
Fomin, S., Shapiro, M., Thurston, D., Cluster algebras and triangulated surfaces. I. Cluster complexes, {\it Acta Math.} {\bf 201}, no. 1 (2008), 83--146. 

\bibitem{G-E}
Garcia Elsener, A., Gentle $m$-Calabi-Yau tilted algebras; preprint \harxiv{1701.07968}.

\bibitem{GM}
Garver, A., McConville, T., Oriented flip graphs and noncrossing tree partitions; preprint \harxiv{1604.06009}. 

\bibitem{GR}
Geiss, C., Reiten, I., Gentle algebras are Gorenstein, {\it Representations of algebras and related topics} (2005), 
Fields Inst. Commun., 45, Amer. Math. Soc., Providence, RI, 129--133. 

\bibitem{Gu}
Gubitosi, V., $m$-cluster tilted algebras of type $\widetilde{A}$; preprint \harxiv{1506.08874}.

\bibitem{Gu1}
Gubitosi, V., Derived class of m-cluster tilted algebras of type $\widetilde{A}$; preprint \harxiv{1507.07484}.

\bibitem{HKK}
Haiden, F., Katzarkov, L., Kontsevich, M., Flat surfaces and stability structures, {\it Publ. Math. Inst. Hautes \'Etudes Sci.} {\bf 126} (2017), 247--318.

\bibitem{HJR}
Holm, T., J\o rgensen, P., Rubey, M., Ptolemy diagrams and torsion pairs in the cluster category of Dynkin type $A_n$, {\it J. Algebraic Combin.} {\bf 34}, no. 3 (2011), 507--523.

\bibitem{HK}
Huerfano, R. S., Khovanov, M., A category for the adjoint representation, {\it J. Algebra} {\bf 246}, no. 2 (2001), 514--542.

\bibitem{Krause}
Krause, H., Maps between tree and band modules, {\it J. Algebra} {\bf 137} (1991), 186--194. 

\bibitem{LF}
Labardini-Fragoso, D., Quivers with potentials associated to triangulated surfaces, {\it Proc. Lond. Math. Soc.} {\bf 98}, no. 3 (2009), 787--839.

\bibitem{Labourie}
Labourie, F., {\it Lectures on representations of surface groups}; Zurich Lectures in Advanced Mathematics, European Mathematical Society (EMS), Z\"urich, 2017.

\bibitem{LP}
Lekili, Y., Polishchuk, A., Derived equivalences of gentle algebras via Fukaya categories, preprint \harxiv{1801.06370}.

\bibitem{L}
Lamberti, L., Combinatorial model for cluster categories of type $E$, {\it J. Algebr. Comb.} {\bf 41} (2015), 1023--1054.

\bibitem{Mu}
Murphy, G. J., Derived equivalence classification of m-cluster tilted algebras of type $A_n$, {\it J. Algebra} {\bf 323}, 4 (2010), 920--965.

\bibitem{OPS}
Opper, S., Plamondon, P.-G., Schroll, S., A geometric model for the derived category of gentle algebras; preprint \harxiv{1801.09659}. 

\bibitem{PPP}
Palu, Y., Pilaud, V., Plamondon, P.-G., Non-kissing complexes and tau-tilting for gentle algebras, preprint \harxiv{1707.07574}.

\bibitem{S}
Schr\"oer, J., Modules without self-extensions over gentle algebras, {\it J. Algebra} {\bf 216}, no. 1 (1999), 178--189.

\bibitem{SZ}
Schr\"oer, J., Zimmermann, A., Stable endomorphism algebras of modules over special biserial algebras, {\it Math. Z.} {\bf 244}, no. 3 (2003), 515--530.

\bibitem{Schroll15}
Schroll, S., Trivial extensions of gentle algebras and Brauer graph algebras; {\it J. Algebra} {\bf 444} (2015), 183--200. 

\bibitem{Tor}
Torkildsen, H. A., A geometric realization of the m-cluster category of affine type A, {\it Comm. Algebra} {\bf 43}, no. 6 (2015), 2541--2567. 
\end{thebibliography}
\end{document}